\documentclass[12pt,reqno]{amsart}

\usepackage{amssymb}
\usepackage{amsthm}

\usepackage[margin=1in]{geometry}
\usepackage{color,graphicx}
\usepackage{thmtools} 
\usepackage[hypertexnames=false]{hyperref}
\usepackage{cleveref}

\usepackage{tikz}
\usetikzlibrary{calc}
\usetikzlibrary{arrows.meta, decorations.markings}
\usepackage{ifthen}
\tikzset{
    network graph/.style={
        x=0.9cm,
        y=1.15cm
    },
    vertex/.style={
        circle,
        draw,
        fill=white,
        minimum size=4.2mm,
        inner sep=0pt,
        font=\scriptsize
    },
    edgelabel/.style={
        font=\scriptsize,
        fill=white,
        inner sep=1.2pt
    },
    directed/.style={
        line width=0.7pt,
        postaction={
            decorate,
            decoration={
                markings,
                mark=at position 0.56 with {
                    \arrow{Stealth[length=1.6mm,width=1.6mm]}
                }
            }
        }
    }
}

\newcommand\nodcnt{\varphi}
\newcommand\R{\mathbb{R}}
\newcommand\C{\mathbb{C}}
\newcommand\Z{\mathbb{Z}}

\newcommand{\E}{{\vec{E}}}

\newcommand{\cP}{\mathcal{P}}
\newcommand{\cH}{\mathcal{H}}
\newcommand{\cS}{\mathcal{S}}
\newcommand{\cA}{\mathcal{A}}

\DeclareMathOperator\Ran{Ran}
\DeclareMathOperator\Hess{Hess}
\DeclareMathOperator\Real{Re}

\DeclareMathOperator\diag{diag}

\newtheorem{theorem}{Theorem}[section]	
\newtheorem{corollary}[theorem]{Corollary}
\newtheorem{lemma}[theorem]{Lemma}
\newtheorem{proposition}[theorem]{Proposition}

\theoremstyle{definition}
\newtheorem{definition}[theorem]{Definition}

\theoremstyle{remark}
\newtheorem{remark}[theorem]{Remark}
\newtheorem{example}[theorem]{Example}

\numberwithin{equation}{section}

\begin{document}

\title{Cycle intersection form and oscillation of graph eigenfunctions}

\author[G.\ Berkolaiko]{Gregory Berkolaiko}
\address{Department of Mathematics,
	Texas A\&M University, College Station,
	TX 77843, USA}
\email{gberkolaiko@tamu.edu}

\author[J.C.\ Bronski]{Jared C. Bronski}
\address{Department of Mathematics, University of Illinois, Urbana, IL 61801, USA}
\email{bronski@illinois.edu}

\author[M.\ Goresky]{Mark Goresky}
\address{Institute for Advanced Study, Princeton, NJ 08540, USA}
\email{goresky@ias.edu}

\begin{abstract}
  For a real symmetric matrix $H$ strictly supported on a finite
  simple graph, it is shown that the inertia of a weighted
  intersection form on the cycle space of the graph, with weights
  derived from a non-vanishing eigenvector of $H$, governs oscillation
  data on the graph.  Specifically, the null space of the form
  controls eigenvalue multiplicity, while its Morse index determines
  the number of sign changes across edges.  In the case of a simple
  eigenvalue, the Hessian at zero of the eigenvalue branch of the
  discrete magnetic Schr\"odinger operator is identified with the dual
  of the cycle intersection form.  Applications are given to stability
  analysis of coupled oscillator networks, to the local behavior of
  dispersion relations for (decorated) strained graphene, and to
  nodal-domain counts.
\end{abstract}

\maketitle

\section{Introduction and the main results}
\label{sec:intro}

Let \(G=(V,E)\) be a simple graph with finite vertex set \(V\). Each
undirected edge in \(E\) is a two-element subset \(\{u,v\}\subset V\);
the corresponding adjacency relation will be denoted by \(u\sim v\).
Fix once and for all an orientation \(\vec E\) of the undirected edge
set \(E\): for each edge \(\{u,v\}\in E\), exactly one of the ordered
pairs \((u,v)\) and \((v,u)\) belongs to \(\vec E\). Thus
\(|\vec E|=|E|\).

The space of real-valued \(1\)-chains on \(G\) is
\begin{equation}
  \label{eq:1chains_def}
  C_1(G,\R)
  :=
  \left\{
    \sum_{e\in\vec E} y_e e \mid y_e\in\R
  \right\}
  \simeq \R^{\vec E}.
\end{equation}
With the convention \(\partial(u,v)=(v)-(u)\), the \emph{cycle
subspace} \(Z\subset C_1(G,\R)\) consists of \(1\)-chains
with zero boundary.  Equivalently, at every vertex
\(u\in V\), the sum of incoming coefficients equals the sum of outgoing
coefficients:
\begin{equation}
  \label{eq:flow_condition}
  \sum_{v:(v,u)\in\vec E} y_{(v,u)}
  =
  \sum_{w:(u,w)\in\vec E} y_{(u,w)}.
\end{equation}
Since there are no \(2\)-chains, the first homology group
\(H_1(G,\R)\) coincides with \(Z\). Its dimension is the
\emph{first Betti number}
\begin{equation}
  \label{eq:beta_def}
  \beta
  :=
  \dim H_1(G,\R)
  =
  |E|-|V|+c,
\end{equation}
where \(c=c(G)\) is the number of connected components of \(G\).

Given a vector $r = \{r_e\}_{e \in \vec{E}}$ of real edge weights,
define the symmetric bilinear form $\mathcal{R}$,
\begin{equation}
  \label{eq:Rform_def}
  \mathcal{R} : C_1(G,\R) \times C_1(G,\R) \to \R,
  \qquad
  \mathcal{R}(y, y') = \sum_{e \in \vec{E}} r_e y_e y'_e.
\end{equation}
The \emph{weighted cycle intersection form}, denoted $\mathcal{X}_r$,
is the restriction of this bilinear form to the cycle subspace
$Z=H_1(G,\R)$.  Concretely, if $R=\diag(r_e)$ and if $C$ is a \emph{frame} for $Z$, i.e.\ 
any $|\vec{E}|\times\beta$ matrix
whose columns form a basis of $Z$, then the matrix
of $\mathcal X_r$ in this basis is $C^\top R C$.  The cycle intersection 
form $\mathcal{X}_r$ turns out to play a remarkable role in the
spectral analysis of the discrete Schr\"odinger operators on the graph
$G$, with the nullity and the Morse index of $\mathcal{X}_r$
determining the multiplicity and nodal count of an
eigenvector.

Recall that for a real symmetric matrix $M$, the \emph{nullity}
$n_0(M)$ is the dimension of its kernel, and the \emph{Morse index}
$n_-(M)$ is the number of its negative eigenvalues.
For a symmetric bilinear form $\mathcal{B}$ on a vector space $W$, the
\emph{nullity} $n_0(\mathcal{B})$ is the dimension of
$\{w \in W \mid \mathcal{B}(w, w') = 0 \text{ for all } w' \in W\}$,
and the \emph{Morse index} $n_-(\mathcal{B})$ is the maximal dimension
of a subspace on which $\mathcal{B}$ is negative definite. By
Sylvester's Law of Inertia, if $B$ is the matrix representation of
$\mathcal{B}$ in any basis, then $n_0(\mathcal{B}) = n_0(B)$ and
$n_-(\mathcal{B}) = n_-(B)$.

A discrete (generalized) Schr\"odinger operator $H$ on $G$ is an
$n \times n$ real symmetric matrix that is \emph{strictly supported on
  $G$}: for any two distinct vertices $u \neq v$, the entry
$H_{u,v} \neq 0$ if and only if $u \sim v$. The eigenvalues of $H$ are real and ordered
$\lambda_1(H) \leq \lambda_2(H) \leq \ldots \leq \lambda_n(H)$. For an
eigenvalue $\lambda$, which may not be simple, its \emph{upper label}
$k^\uparrow$ is
\begin{align}
  \label{eq:upper_label_def}
  k^\uparrow &:= \max\{k\in \mathbb{N} \colon \lambda_k(H) = \lambda
  \} \\
  \nonumber
  &= n_-(H-\lambda) + n_0(H-\lambda).
\end{align}

Let $\psi$ be an
eigenvector of $H$ corresponding to an eigenvalue $\lambda$, and
assume all entries of $\psi$ are non-zero. We define the \emph{nodal
  count} of $\psi$ as
\begin{equation}
  \label{eq:nodal_count_def}
  \nodcnt(\psi,H) := \# \left\{(u,v)\in \vec{E}\colon \psi_u H_{u,v} \psi_v > 0 \right\}.
\end{equation}
We also refer to the nodal count in \eqref{eq:nodal_count_def} as the
\emph{oscillation} of $\psi$; the term reflects the analogy with the
oscillation of Sturm--Liouville eigenfunctions, made precise in
\Cref{sec:motivation_nodal}.  When $H$ is the discrete Laplace
operator, $H_{u,v} = -1$ and the edges counted in oscillations
$\nodcnt(\psi,H)$ are exactly the edges on which $\psi$ changes sign,
$\psi_u\psi_v < 0$; the definition \eqref{eq:nodal_count_def} is
formulated so as to avoid any assumption on the signs of the entries
of $H$.

\begin{theorem}
  \label{thm:main}
  Let $H$ be strictly supported on a finite, simple graph $G$ with
  $c=c(G)$ connected components, and let $\psi \in \ker(H-\lambda)$ be
  an eigenvector of $H$ with no zero entries. Define the edge weights $r$
  in the weighted cycle intersection form $\mathcal{X}_r$ by
  \begin{equation}
    \label{eq:the_r_formula}
    r_e = -(\psi_u H_{u,v} \psi_v)^{-1},
  \end{equation}
  for each edge $e=(u,v) \in \vec{E}$. Then the \emph{multiplicity} of
  the eigenvalue $\lambda$ is
  \begin{equation}
    \label{eq:main_formula_nullity}
    \dim \ker (H-\lambda) = c + n_0(\mathcal{X}_r),
  \end{equation}
  and the \emph{nodal count} of $\psi$ is
  \begin{equation}
    \label{eq:main_formula_surplus}
    \nodcnt(\psi,H) = k^\uparrow - c + n_-\left( \mathcal{X}_r \right).
  \end{equation}
\end{theorem}

\begin{remark}
  \label{rem:Xcoord}
  In terms of the coordinate representation $C^\top R C$ of the form $\mathcal{X}_r$ defined above, 
  the Morse index in \eqref{eq:main_formula_surplus} 
  is $n_-(C^\top R C)$ and the nullity \eqref{eq:main_formula_nullity} is $n_0(C^\top R C)$, both independent 
  of the choice of the frame $C$.
\end{remark}

\begin{remark}
  \label{rem:nonzero}
  \Cref{thm:main} can also be applied to eigenvectors with \emph{some
    zero entries} by removing their rows and columns from the matrix
  $H$ (and the graph).  We showcase this idea by estimating the number
  of \emph{nodal domains} of $\psi$ in \Cref{sec:nod_dom}.
\end{remark}

\begin{remark}
  \label{rem:orientation}
  The operator $H$ and the nodal count $\nodcnt(\psi,H)$ are agnostic of
  the orientation of the edges of $G$.  The nullity and the Morse
  index of the form $\mathcal{X}_r$ are also independent of the
  orientation, even though we used a choice of orientation to define
  $\mathcal{X}_r$.
\end{remark}

When the eigenvalue $\lambda$ is simple, Theorem~\ref{thm:main} has a
useful ``dual companion''.  Since $\dim \ker (H-\lambda) = 1$, the
cycle intersection form $\mathcal{X}_r$ is non-degenerate and
therefore has a \emph{dual} $\mathcal{X}_r^\sharp$ (the definition is
reviewed in \Cref{sec:musical_Hodge}), which turns out to coincide
with the Hessian of the eigenvalue function of the discrete magnetic
Schrödinger operator, an object governing the local behavior of band
functions on abelian covers of the graph $G$ (see
\Cref{sec:motivation_dispersion}).

The discrete magnetic Schrödinger operator is
defined as a \emph{phase-perturbed} matrix:
\[
(H_{\alpha})_{u,v} := e^{i\alpha_{u,v}} H_{u,v}, 
\]
where $\alpha$ is a real valued antisymmetric function of pairs $u,v$
such that $u\sim v$.  The space of such $\alpha$ will be identified
with the space of 1-cochains $C^1(G, \R) := \R^{\vec{E}}$, i.e.\ the
functions from $\vec{E}$ to $\R$; equivalently, with the antisymmetric
matrices $\cA(G)$ (see \Cref{sec:phase_pert}).
The matrix $H_\alpha$ is unitarily
equivalent to $H_{\alpha+d\theta}$ for any $\theta \in \R^V$, where
$d$ is the coboundary operator,
\begin{equation}
  \label{eq:d_def}
  d: \R^V \to C^1(G,\R),
  \qquad
  (d\theta)_{u,v} = \theta_v - \theta_u,
  \quad
  u\sim v.
\end{equation}
Since unitarily equivalent matrices have identical spectra, the
eigenvalue function $\Lambda(\alpha) := \lambda_k(H_\alpha)$ is
constant on affine subspaces $\alpha+\Ran d\subset C^1(G,\R)$.
Therefore, the Hessian $\Hess\Lambda(0)$ vanishes whenever one
argument lies in $\Ran d$ and thus descends to a well-defined bilinear
form on the cohomology $H^1(G,\R)=C^1(G,\R)/\Ran d$.  Using the
natural pairing, $H^1(G, \R)$ can be viewed as the dual space to the
cycle space $Z=H_1(G, \R)$.

\begin{theorem}[Magnetic Hessian]
  \label{thm:magHessian}
  Let $\lambda_k$ be a simple eigenvalue of $H$ with eigenvector
  $\psi$ that has no zero entries. Then the function
  $\Lambda(\alpha) = \lambda_k(H_\alpha)$ has a critical point at
  $\alpha=0$. Its Hessian at this point,
  $\Hess\Lambda\colon C^1(G,\R) \times C^1(G,\R) \to \R$, is a
  bilinear form on $C^1(G,\R)$ given by the dual form
  $\mathcal{X}_r^\sharp$ via
  \begin{equation}
    \label{eq:Hess}
    \frac12\Hess\Lambda(\alpha_1, \alpha_2) =
    \mathcal{X}_r^\sharp\Big([\alpha_1], [\alpha_2]\Big),
    \qquad
    r_e = -(\psi_u H_{u,v} \psi_v)^{-1}.
  \end{equation}
  where $[\alpha] \in H^1(G,\R)$ denotes the equivalence class of an
  $\alpha\in C^1(G,\R)$.

  In coordinates, if $C$ is a frame for the cycle space $Z$, the Hessian is given by
  the matrix $C (C^\top R C)^{-1} C^\top$, i.e.
  \begin{equation}
    \label{eq:Hess_coord}
    \frac12\Hess\Lambda(\alpha_1, \alpha_2) =
    \left<\alpha_1, C (C^\top R C)^{-1} C^\top
      \alpha_2\right>_{\R^{\vec{E}}},
  \end{equation}
  where $R$ is the diagonal matrix of the weights
  $r_e$ and the inner product is the
  standard inner product on $C^1(G,\R) = \R^{\vec{E}}$.
\end{theorem}

\begin{remark}
  \label{rem:dualX_coords}
  We highlight two beautiful features of the coordinates
  representation of the Hessian.  First, it is independent of the
  choice of the basis for the cycle space: replacing $C$ with $CT$,
  where $T$ is $\beta\times\beta$ invertible, leaves it invariant.
  Second, since (see \Cref{sec:coboundary})
  \begin{equation}
    \label{eq:equiv_criterion}
    [\alpha_1] = [\alpha_2] \quad
    \Longleftrightarrow \quad
    C^\top \alpha_1 = C^\top \alpha_2,
  \end{equation}
  the value of the Hessian in \eqref{eq:Hess_coord} clearly depends
  only on the cohomology class of $\alpha_{1,2}$, agreeing with
  \eqref{eq:Hess}.
\end{remark}

\begin{remark}
  \label{rem:fluxHessian}
  In applications such as spectral analysis of discrete periodic
  operators, see \Cref{sec:motivation_dispersion}, one does not
  consider the whole space of phases $\alpha\in C^1(G,\R)$, focusing
  instead on the dependence of $\Lambda$ on the cohomology class
  $[\alpha] \in C^1(G,\R) / \Ran d$.  This is done by restricting
  $\Lambda$ to a ``representative'' $\beta$-dimensional linear
  subspace of $C^1(G,\R)$ which is complementary to $\Ran d$.  This
  subspace is parametrized by the \emph{flux coordinates}
  $C^\top \alpha =: \vec{k} \in \R^\beta$.  In terms of these
  coordinates the Hessian is
  \begin{equation}
    \label{eq:Hess_flux_coords}
    \frac12\Hess\Lambda(\vec{k}_1, \vec{k}_2) =
    \left<\vec{k}_1,  (C^\top R C)^{-1} \vec{k}_2\right>_{\R^\beta},
  \end{equation}
  as seen immediately from \eqref{eq:Hess_coord}.
\end{remark}

\subsection*{Outline}

The paper is structured as follows.  \Cref{sec:motivation} describes
our motivation, placing our results in the context of oscillation
theory on graphs, discussing the role of the magnetic Hessian in
Floquet--Bloch theory for periodic discrete operators, and relating
$\det(C^\top R C)$ to cographic matroids and associated
basis-generating polynomials.

\Cref{sec:examples} illustrates the weighted cycle intersection form
$\mathcal X_r$ through explicit examples and computations,
illustrating how to build frames $C$ for the cycle space and how the
inertia of $C^\top R C$ encodes nodal and multiplicity information.

Sections \ref{sec:aux} and \ref{sec:magnetic} contain the proofs of
\Cref{thm:main} and \Cref{thm:magHessian} correspondingly.  We chose
to express the first proof in the linear-algebraic language, while the
second proof treats its objects as forms rather than matrices.

We conclude with three sections of applications.  \Cref{sec:graphene}
applies the magnetic-Hessian formalism to band functions of
(decorated) strained graphene, where the inverse matrix
$(C^\top R C)^{-1}$ governs the local second-order behavior of the
dispersion relation near symmetry-enforced critical points (the
relevant symmetry, and the resulting criticality, are those of
\eqref{eq:real_symm}).
\Cref{sec:oscillator_networks} applies \Cref{thm:main} to stability
problems for coupled oscillator networks, where the position of a
known eigenvector in the spectrum governs the inertia of a
linearization; we illustrate this in the setting of a non-homogeneous
Kuramoto model on a graph.  Finally, \Cref{sec:nod_dom} applies the
main theorem to the counting of nodal domains on graphs, which is a
question adjacent to counting sign changes.

\subsection*{Acknowledgements}

The authors are grateful to Graham Cox, Robert Marangell and Selim
Sukhtaiev, who at various times suggested that there may be
interesting synergies between to results of J.C.B. and his
collaborators, and of G.B., M.G. and their collaborators.  This work
is the result of such a synergy.  We are also thankful to Ram Band and
Lior Alon, who on numerous occasions explained to us their
understanding of the CdV gauge (cf.\ \Cref{lem:Hodge_flat} and
\Cref{lem:gauge_useful} below).  The authors are grateful to Graham
Cox for a careful reading of the manuscript, and to Galen
Dorpalen-Barry and Cynthia Vinzant for several pointers to the
literature we cite in \Cref{sec:motivation_matroids}.  Portions of the
manuscript were revised with the assistance of the AI language models
ChatGPT, Gemini, and Claude.

This material is based upon research supported by a grant (G.B.) from
the Institute for Advanced Study, School of Mathematics.  G.B. also
acknowledges support from the NSF Grants DMS-2247473, DMS-2510345 and
from the Association of Former Students of Texas A\&M University
through the Faculty Development Leave program.  J.C.B. acknowledges
the support from the NSF Grant DMS-2407358.

\section{Motivation and related results}
\label{sec:motivation}

\subsection{Nodal counting on graphs}
\label{sec:motivation_nodal}

Theorem~\ref{thm:main} belongs to the broad circle of ideas known as
\emph{oscillation theory}, originating in Sturm's work
\cite{Stu_jmpa36}.  In this theory, one relates the position \(k\) of
an eigenvalue in the spectrum to the zero set, or sign-changing set, of
a corresponding eigenfunction.  Such results have two complementary
uses.  In one direction, knowing \(k\) gives information about the
geometry or topology of the nodal set and its complement, which is
useful in clustering and partitioning problems
\cite{Fie_cmj75b,Spi_ICM10}.  In the other direction, knowing an
eigenvector may allow one to determine whether it is the ground state,
as happens, for example, in the linear stability analysis of solutions
to nonlinear equations \cite{GriShaStr_jfa87}.

An application of the latter type is discussed in
\Cref{sec:oscillator_networks} below, where we apply
\eqref{eq:main_formula_surplus} to the stability analysis of a known
solution of a coupled oscillator network.  In this direction,
\Cref{thm:main} extends earlier results of Bronski, DeVille and
Ferguson \cite{BroDevFer_siamjam16}, who derived
\eqref{eq:main_formula_surplus} under the assumptions that
\(\psi=\mathbf 1\) and that the corresponding eigenvalue is simple.
Our result applies to more general oscillator networks; we illustrate
this with a non-homogeneous Kuramoto model.

A key point of \Cref{thm:main} is that the same finite-dimensional
object, the weighted cycle intersection form \(\mathcal X_r\),
controls two phenomena: eigenvalue multiplicity and nodal count.  To
place this within the context of existing literature, it is well known
that all eigenvalues are simple on a path graph
\cite[Prop.~2.2.5]{DamanikFillman_1dSchroedI}.  Fiedler
\cite[Prop.~1]{Fie_cmj75} showed that, on a tree, if the eigenspace of
\(\lambda\) contains an eigenvector with no zero entries, then
\(\lambda\) is simple.  Formula \eqref{eq:main_formula_nullity}
extends this principle to graphs with cycles: if the eigenspace
contains a vector with no zero entries, then, apart from the
unavoidable contribution of the connected components, eigenvalue
multiplicity can occur only through degeneracies of the cycle
intersection form.  In particular, it is bounded by \(\beta+c(G)\).

Moving to sign changes, Gantmacher and Krein \cite{GanKre_oscillation}
showed that the \(k\)-th eigenvector \(\psi_k\) of a path graph
changes sign exactly \(k-1\) times, in direct analogy with Sturm's
theorem for second-order differential operators on an interval.
Fiedler \cite{Fie_cmj75} proved that the same ``Sturm count'' holds on
tree graphs\footnote{And, remarkably, \emph{only} on tree graphs, as
  shown almost 40 years later by Band \cite{Ban_ptrsa14} using the
  nodal-magnetic theorem, \Cref{cor:nodal_magnetic}.} provided
the eigenvalue is simple and the eigenvector has no zero entries.
Berkolaiko \cite{Ber_cmp08} extended this to arbitrary connected graphs by
proving the bound
\begin{equation}
  \label{eq:nodal_bound}
  0 \leq \nodcnt(\psi,H) - (k-1) \leq \beta,
\end{equation}
where \(\beta\) is the first Betti number of the graph.  Inequality
\eqref{eq:nodal_bound} suggests that the \emph{nodal surplus}
$\nodcnt(\psi,H)-(k-1)$ is ``created'' by the cycles of the graph.
Theorem~\ref{thm:main} makes this precise: the surplus is the Morse
index of the weighted cycle intersection form \(\mathcal X_r\).  In
\Cref{sec:nod_dom} we apply this theorem to provide short proofs for
some lower bounds for a related quantity, the number of nodal domains
\cite{Ber_cmp08,Biy_laa03,DavGlaLeySta_laa01,DeiPutTud_acha23,GeLiu_cvpde23,Leydold_nodal,XuYau_jc12}.

This interpretation also clarifies several results and conjectures
about the nodal count and nodal domain count statistics, a
subject of substantial recent interest
\cite{AloBanBer_cmp17,AloGor_prep24,AloMikUrs_prep24,BanOreSmi_incoll08,BerRazSmi_jpa12,GanMckMohSri_cmp23,MckUrs_imrn24}.
For graphs with disjoint cycles, \(\nodcnt(\psi,H)\) has been shown to
follow a binomial distribution \cite{AloBanBer_cmp17,AloGor_prep24}.
Theorem~\ref{thm:main} recasts these statistical questions as
questions about the inertia of the weighted cycle intersection form
\(\mathcal X_r\).  For graphs with disjoint cycles, this form is
diagonal in a natural cycle basis, so each cycle contributes either
\(0\) or \(1\) to the Morse index in \eqref{eq:main_formula_surplus};
this is the intuition behind the binomial law.  More generally, if the
graph has several blocks, meaning maximal biconnected subgraphs, then
\(\mathcal X_r\) has the corresponding block structure.  This suggests
that graphs with many small blocks should exhibit approximately
Gaussian nodal-count statistics.  Numerical evidence further suggests
that \(\nodcnt(\psi,H)\) is Gaussian for broad classes of large
graphs; see \cite{AloBanBer_em22,AloBerGor_prep25} for precise
formulations of this conjecture.

Finally, combining \Cref{thm:main} with \Cref{thm:magHessian} we
immediately recover the nodal--magnetic theorem of Berkolaiko
\cite{Ber_apde13} and Colin de Verdi\`ere \cite{Col_apde13}.

\begin{corollary}[Nodal--magnetic theorem \cite{Ber_apde13,Col_apde13}]
  \label{cor:nodal_magnetic}
  Let \(\lambda_k\) be a simple eigenvalue of \(H\) strictly supported
  on a connected graph, with eigenvector \(\psi\) having no zero
  entries. Then the Morse index of the function
  \(\Lambda(\alpha)=\lambda_k(H_\alpha)\) at the critical point
  \(\alpha=0\) is
  \begin{equation}
    \label{eq:Morse_index}
    n_-\big(\Hess\Lambda\big)
    =
    \nodcnt(\psi,H)-(k-1).
  \end{equation}
\end{corollary}

Indeed, by \Cref{thm:magHessian}, the Hessian of \(\Lambda\) is the
dual form to \(\mathcal X_r\), and therefore has the same Morse index as
\(\mathcal X_r\).  The identity \eqref{eq:Morse_index} then follows
from \eqref{eq:main_formula_surplus} of \Cref{thm:main}.

Our proofs of \Cref{thm:main} and \Cref{thm:magHessian} combine ideas
from Colin de Verdi\`ere's proof \cite{Col_apde13}, see also the
recent exposition in
\cite[Sec.~3.5]{NaderiPankrashkin_SpectralGraphTheory}, with the
approach of Bronski, DeVille and Ferguson
\cite{BroDevFer_siamjam16}.  The resulting picture is that the cycle
space carries an all-important weighted form \(\mathcal X_r\): its
nullity accounts for eigenvalue multiplicity, its Morse index accounts
for nodal surplus, and, in the simple-eigenvalue case, its dual is the
magnetic Hessian.  The last point is particularly useful in the
spectral analysis of periodic discrete operators, discussed next.

\subsection{Periodic discrete operators}
\label{sec:motivation_dispersion}

The magnetic Hessian formula of \Cref{thm:magHessian} has a natural
interpretation in the spectral theory of periodic discrete operators.
We now briefly recall the Floquet--Bloch theory of periodic discrete
operators and its relation to magnetic perturbations; further
references include \cite[Chap.~4]{BerKuc_graphs} and
\cite{KorSab_jmaa14,KotSun_incol03,Kuc_bams16,ShiSot_ot26}.

A graph \(\widetilde G=(\widetilde V,\widetilde E)\) is called
\(\mathbb Z^d\)-periodic if \(\mathbb Z^d\) acts on \(\widetilde G\)
by graph isomorphisms, freely and with finite quotient.  The
\emph{quotient graph} $G=\widetilde G/\mathbb Z^d$ has, as vertices
and edges, the \(\mathbb Z^d\)-orbits of vertices and edges of
\(\widetilde G\), with incidence inherited from \(\widetilde G\) (see
\Cref{fig:graphene} in \Cref{sec:graphene} for an example). Then,
\(\widetilde G\) is an \emph{abelian cover} of the finite graph \(G\),
obtained by periodically repeating \(G\).

A self-adjoint operator \(\widetilde H\) on \(\ell^2(\widetilde V)\),
supported on \(\widetilde G\), is called periodic if \(\widetilde H\)
commutes with the \(\mathbb Z^d\)-action.  Namely, matrix coefficients of $\widetilde H$ are
invariant under simultaneous translation of both arguments:
\[
  \widetilde H_{\tilde u+m,\tilde v+m}
  =
  \widetilde H_{\tilde u,\tilde v},
  \qquad
  \tilde u,\tilde v\in\widetilde V,\quad m\in\mathbb Z^d,
\]
where \(\tilde u+m\) denotes the action of the group element \(m\) on
the vertex \(\tilde u\).

Floquet--Bloch theory decomposes such an operator into
finite-dimensional fiber operators over the \emph{Brillouin torus}
$\mathbb T^d := \mathbb R^d/(2\pi\mathbb Z^d)$.  Namely,
\begin{equation}
  \label{eq:IntegralDecomp}
  \widetilde H
  \simeq
  \int_{\mathbb T^d}^{\oplus} H(\vec{k})\,d\vec{k},
\end{equation}
where \(\vec{k}\in\mathbb T^d\) is the \emph{quasimomentum} and
\(H(\vec{k})\) is a finite matrix acting on functions on the quotient
graph \(G\).  We refer to \cite[Vol 4, Thm XIII.97]{ReedSimon_v14} for
the general direct-integral formalism and to the periodic-graph
literature for this construction in the discrete setting; see, for
example,
\cite{HarKucSob_jpa07,HigShi_ymj99,KorSab_jmaa14,Kuc_bams16,ShiSot_ot26,Sunada_TopologicalCrystallography}.
An immediate consequence of \eqref{eq:IntegralDecomp} is that the
spectrum $\sigma(\widetilde H)$ decomposes into bands,
\begin{equation}
  \label{eq:SpectrumDecomp}
  \sigma(\widetilde H)=\bigcup_{j=1}^{|V|} B_j,
  \qquad
  B_j:=\{\lambda_j(H(\vec{k})):\vec{k}\in\mathbb T^d\}.
\end{equation}
The \emph{bands} $B_j$ may overlap; open intervals in the complement
of their union are \emph{spectral gaps}.  The eigenvalue functions
$\vec{k} \mapsto \lambda_j(H(\vec{k}))$ are called the
\emph{dispersion relations}, or \emph{Bloch band functions}, of the
periodic operator.

We now focus on the case relevant to the magnetic Hessian formula, \Cref{thm:magHessian}.
Let \(G\) be a connected finite graph with first Betti number
\(\beta\).  There is a distinguished \(\mathbb Z^\beta\)-periodic
cover of \(G\), called the \emph{maximal abelian cover}.  It is unique
up to isomorphism and is maximal in the sense that every connected
abelian cover of \(G\) is obtained as its quotient, see \cite[Thm
6.1]{Sunada_TopologicalCrystallography}.  In the rest of this
section, assume that \(G\) is simple and that \(\widetilde G\) is
this maximal abelian cover.

Choose a spanning tree \(\mathcal T\subset G\).  The complement
\(E(G) \setminus E(\mathcal T)\) consists of \(\beta\) edges; fix an
orientation and label them $e_1,\ldots,e_\beta$.  These edges
determine a cycle basis.  In a suitable Floquet gauge, and using the
notation of \Cref{sec:intro}, the fiber operator
in~\eqref{eq:IntegralDecomp}-\eqref{eq:SpectrumDecomp} is obtained as
\[
  H(\vec{k})=H_{\alpha_{\vec{k}}},
  \qquad
  \vec{k}=(k_1,\ldots,k_\beta)\in\mathbb T^\beta
\]
where the \(1\)-cochain \(\alpha_{\vec{k}}\in C^1(G,\mathbb R)\) is
given by
\begin{align*}
  \alpha_{\vec{k}}(e_j)=k_j,\qquad e_j \in E(G) \setminus E(\mathcal T),
  \qquad
  \alpha_{\vec{k}}(e)=0,\qquad e\in E(\mathcal T).
\end{align*}
Thus the quasimomenta \(k_j\) are precisely the magnetic fluxes through
the fundamental cycles.  

In these coordinates, \Cref{thm:magHessian}, or, more precisely,
equation \eqref{eq:Hess_flux_coords}, gives an explicit formula for
the quadratic part of a band function at \(\vec{k}=0\).  Thus, in the
maximal abelian cover, the effective quadratic behavior of the
dispersion relation is given by the inverse of the weighted cycle
intersection matrix.  This observation is trivially extended to other
\emph{corner points} $\vec{k} \in \{0,\pi\}^\beta$ by changing the
sign of the matrix elements $H_{u_j,v_j}$ corresponding to $j$ with
$k_j=\pi$.

Earlier appearances of (unweighted) cycle-intersection forms in this
context addressed the lowest band of a periodic graph or topological
crystal; see \cite[Lem.~2.1]{PhiSar_dmj87} and
\cite[Thm.~6.1]{KotShiSun_jfa98}.  \Cref{thm:magHessian} extends this
picture to arbitrary band edges generated by corner points;
\Cref{sec:graphene} contains an example application to the study of
band functions of graphene.

To conclude, we mention that the local behavior of a band function
near a critical point is central in solid state physics and spectral
theory.  At a non-degenerate band edge, the Hessian
\((C^\top R C)^{-1}\) gives the inverse effective-mass tensor up
to the standard normalization constants; more generally, its signature
determines whether the critical point is an extremum or a saddle.
Such second-order data enter the study of effective Hamiltonians, van
Hove singularities, and dispersive properties of lattice evolution
equations; see \cite{AshcroftMermin_solid,Kuc_bams16,KorSab_jmaa16}.

\subsection{Cycle intersection form, electrical networks, and cographic matroids}
\label{sec:motivation_matroids}

We collect here some references on the meaning of the weighted cycle
intersection form and its determinant in the hope that further research
may reveal interesting corollaries of our results.

The most familiar interpretation of $\mathcal X_r$ comes from
electrical network theory: when \(r_e>0\) is interpreted as the
resistance of edge \(e\), the quadratic form
\begin{equation}
  \label{eq:power_dissipated}
  \mathcal X_r(a,a)=\sum_{e\in\vec E} r_e a_e^2
\end{equation}
is the total power dissipated by a cycle current \(a\), see \cite[Sec
2.4]{LyonsPeres_ProbTreesNetworks}; the matrix $C^\top R C$
representing $\mathcal X_r$ in a cycle basis (\Cref{sec:intro}) is
called the \emph{loop impedance matrix} in circuit analysis
\cite[Sec.~5.2]{Chua_LinearNonlinearCircuits}.  For positive
resistances the form is positive definite on the cycle space, whereas
the spectral problems considered here naturally lead to the signed
weights
as in \eqref{eq:the_r_formula}, and the nodal surplus measures the
number of negative directions of this signed form.

If $T$ is a spanning forest of $G$ (i.e., a spanning tree of each
connected component), the edges $e \in T^*=E \setminus T$ in its
complement form a {\em coforest} (a {\em cotree}, when $G$ is
connected).  The choice of $T$ uniquely defines a basis of the cycle
space $H_1(G;\R)$: to each edge $e \in T^*$ we associate the cycle
that traverses $e$ once in the positive orientation and no other edge
of $T^*$.  This basis defines the $|E|\times \beta$ matrix $C$ of
\Cref{sec:intro}: the entry $C_{e,\gamma}$ is the oriented incidence
number $+1$, $-1$, or $0$ of the edge $e$ in the basis cycle $\gamma$.

Importantly, the basis constructed from $T$ as above is not only a
basis (over the reals) of the linear space $H_1(G;\R)$, but also (over
the integers) of the lattice $H_1(G;\Z)$
\cite{AnBakKupSho_fms14,BacHarNag_bsmf97,Big_blms97,KotSun_aam00,Sjo_mcm91}.  Let $A$
be a set of $\beta$ edges of $G$ and let $C_A$ be the
$\beta \times \beta$ submatrix of $C$, obtained by collecting the rows
indexed by $A$.  Then
\begin{equation}
  \label{eq:coforest_det}
  \det C_A =
  \begin{cases}
    \pm1, & \text{if } A \text{ is a coforest},\\
    0, & \text{otherwise}.
  \end{cases}
\end{equation}
To see this, observe that $C_{T^*}$, where
$T$ is the forest that defined $C$, is the identity matrix.  For any
other coforest $A$, there is a matrix $C'$ obtained from the forest
$T' = E\setminus A$ and thus having $C'_A = I$.  Since the columns of
both $C$ and $C'$ are bases of the lattice $H_1(G;\Z)$, there is a
matrix $S$ with integer entries such that $C' = CS$.  Restricting to
the rows indexed by $A$, we get $\det C_A \det S = \det C'_A = 1$ and,
finally, $\det C_A = \pm1$, because $\det S \in \Z$.  When $A$ is not
a coforest, there is a cycle which contains no edges from $A$ and we
can similarly engineer $C'$ so that $C'_A$ has a column of zeros,
which analogously leads to $\det C_A = 0$.

In fact, more is true: every square submatrix of $C$, of any size, has
determinant $+1$, $-1$, or $0$, i.e.\ the matrix $C$ is {\em totally
  unimodular} \cite[Sec 5.4]{Tutte_matroids}.  This follows from the
case of the maximal ($\beta \times \beta$) minors above because $C$
contains the identity block $C_{T^*}$; see \cite[Chap
19]{Schrijver_LinearAndIntegerProg} for this and further information
on total unimodularity.

In matroid language, the rows of the matrix $C$ form a representation
of Tutte's cographic matroid $M^*(G)$ --- the matroid on edges $E$
whose independent sets are coforests and their subsets
\cite[Sec 5.6]{Tutte_matroids}, \cite[Sec.~2.3]{Oxley_matroid}.  In this context, the {\em cycle
  intersection matrix}
\begin{equation}
  \label{eq:cographic_form}
  \mathcal{X}_r = C^\top R C
  =
  \sum_{e\in\vec E} r_e c_e^{\top} c_e,
\end{equation}
where $R = \operatorname{diag}(r_e)$ is a diagonal matrix of weights
on the graph, is assembled directly from the rows $c_e$ of $C$
representing $M^*(G)$.  As in \cite[Cor.~1]{Ria_dam12} or
\cite[Thm.~2.12]{BroDevFer_siamjam16}, the Cauchy-Binet formula gives
\begin{equation}
  \label{eq:det_cographic_basis}
  \det(C^\top R C)
  =
  \sum_{\substack{A\subset E\\ |A|=\beta}}
  \det(C_A)^2 \prod_{e\in A} r_e
  =
  \sum_{T^*} \prod_{e\in T^*} r_e,
\end{equation}
where the last sum is over coforests $T^*$ of $G$, by virtue of
\eqref{eq:coforest_det}.  In other words, the determinant
$\det(C^\top R C)$ is the {\em basis generating polynomial} of the
cographic matroid $M^*(G)$, also known as the cographic Kirchhoff
polynomial, or first Symanzik polynomial, depending on convention; see
\cite{Tutte_graph_theory,BogWei_ijmpa10}.  It is a {\em stable
  polynomial}, cf.~\cite{BorBra_dmj08,ChoOxlSokWag_aam04}.  At
\(r_e \equiv 1\), \eqref{eq:det_cographic_basis} recovers the number
of spanning forests of \(G\), in agreement with the matrix-tree
theorem.  If \(r_e\) is interpreted as the length of edge \(e\), then
\(\det(C^\top R C)\) gives the squared covolume of the cycle lattice,
equivalently the squared volume of the Jacobian torus of the
corresponding metric graph; see \cite[Thm.~5.2]{AnBakKupSho_fms14}.

From the point of view of the present paper, the significance of
\Cref{thm:main} is that it gives spectral meaning not only to the
determinant \eqref{eq:det_cographic_basis}, but to the full inertia of
the cographic form \eqref{eq:cographic_form}.  As explained in
\Cref{sec:CIF_examples} below, given the graph $G$ and the weights $R$
there exists a discrete Schr\"odinger operator $H$ with an eigenvector
$\psi$ such that the given weights satisfy
\eqref{eq:the_r_formula}. Then \Cref{thm:main} gives
\begin{equation}
  \label{eq:inertia_spectral}
  n_0(\mathcal X_r)=
  \dim\ker(H-\lambda)-c(G), \qquad\text{and}\qquad
    n_-(\mathcal X_r)=
  \nodcnt(\psi,H)-\big(k^\uparrow-c(G)\big),
\end{equation}
where $\nodcnt(\psi,H)$ is the nodal count of $\psi$.

Therefore the degeneracy locus $\det(C^\top R C)=0$, viewed as a
condition on the weights $R=\diag(r_e)$, is precisely the locus where
the corresponding eigenvalue multiplicity exceeds the contribution
forced by the connected components.  Away from
this hypersurface, the nodal surplus is locally constant as a function
of the weight vector \(r\), and its parity is determined by the sign of
the cographic basis polynomial:
\begin{equation}
  \label{eq:surplus_parity_det}
  (-1)^{\nodcnt(\psi,H)-k^\uparrow+c(G)}
  =
  \operatorname{sgn}\det(C^\top R C),
\end{equation}
whenever \(\mathcal X_r\) is nondegenerate. 
Thus the real algebraic stratification of the cographic
basis-generating polynomial controls the possible nodal surplus and
eigenvalue multiplicity of the corresponding Schr\"odinger operator $H$.

\section{Some examples}
\label{sec:examples}

\subsection{Examples of the weighted cycle intersection form}
\label{sec:CIF_examples}

Consider the three oriented graphs shown in \Cref{fig:example_graphs},
listed from left to right as \(G_1,G_2,G_3\).  We write \(e_j\) for
the edge labeled \(j\), and abbreviate $r_j := r_{e_j}$.
The matrices below are written with respect to the depicted
orientations.  Reversing the orientation of an edge changes the
corresponding row of a cycle frame by a sign, but does not change the
inertia of the resulting cycle intersection form.

\begin{figure}
  \centering
\begin{tikzpicture}[network graph,scale=1.3]
\node[vertex] (v1) at (-1.5,4) {$1$};
\node[vertex] (v2) at (0,4) {$2$};
\node[vertex] (v4) at (0,3) {$4$};
\node[vertex] (v3) at (1,3.5) {$3$};
\node[vertex] (v5) at (0,2) {$5$};
\node[vertex] (v7) at (0,1) {$7$};
\node[vertex] (v6) at (1,1.5) {$6$};
\node[vertex] (v8) at (-1.5,1) {$8$};

\draw[directed] (v1) -- node[edgelabel, pos=0.5, above right=3pt] {$1$} (v2);
\draw[directed] (v2) -- node[edgelabel, pos=0.28, left=4pt] {$2$} (v4);
\draw[directed] (v4) -- node[edgelabel, pos=0.30, below right=2pt] {$3$} (v3);
\draw[directed] (v3) -- node[edgelabel, pos=0.30, above right=2pt] {$4$} (v2);
\draw[directed] (v4) -- node[edgelabel, pos=0.28, left=4pt] {$5$} (v5);
\draw[directed] (v5) -- node[edgelabel, pos=0.30, left=4pt] {$6$} (v7);
\draw[directed] (v7) -- node[edgelabel, pos=0.30, below right=3pt] {$7$} (v6);
\draw[directed] (v6) -- node[edgelabel, pos=0.30, above right=3pt] {$8$} (v5);
\draw[directed] (v7) -- node[edgelabel, pos=0.5, above left=3pt] {$9$} (v8);
\end{tikzpicture}
\qquad
\begin{tikzpicture}[network graph]
\node[vertex] (v1) at (4,0) {$1$};
\node[vertex] (v2) at (2,2) {$2$};
\node[vertex] (v3) at (2,-2) {$3$};
\node[vertex] (v4) at (0,0) {$4$};

\draw[directed] (v1) -- node[edgelabel, pos=0.5, above right=2pt] {$1$} (v2);
\draw[directed] (v1) -- node[edgelabel, pos=0.5, below right=2pt] {$2$} (v3);
\draw[directed] (v2) -- node[edgelabel, pos=0.5, right=4pt] {$3$} (v3);
\draw[directed] (v2) -- node[edgelabel, pos=0.5, above left=2pt] {$4$} (v4);
\draw[directed] (v3) -- node[edgelabel, pos=0.5, below left=2pt] {$5$} (v4);

\end{tikzpicture}
\qquad
\begin{tikzpicture}[network graph]
\node[vertex] (v1) at ( 1.5, 0) {$1$};
\node[vertex] (v2) at ( 0,    2) {$2$};
\node[vertex] (v3) at ( 0,   -2) {$3$};
\node[vertex] (v4) at (-1, 0) {$4$};
\node[vertex] (v5) at ( -2, 0) {$5$};

\draw[directed] (v1) -- node[edgelabel, pos=0.5, above right=3pt] {$1$} (v2);
\draw[directed] (v1) -- node[edgelabel, pos=0.5, below right=3pt] {$2$} (v3);
\draw[directed] (v2) -- node[edgelabel, pos=0.5, right=4pt] {$3$} (v3);
\draw[directed] (v2) -- node[edgelabel, pos=0.5, below right=2pt] {$4$} (v4);
\draw[directed] (v3) -- node[edgelabel, pos=0.5, above right=2pt] {$5$} (v4);
\draw[directed] (v2) -- node[edgelabel, pos=0.5, above left=2pt] {$6$} (v5);
\draw[directed] (v5) -- node[edgelabel, pos=0.5, below left=2pt] {$7$} (v3);

\end{tikzpicture}
  \caption{Example graphs with vertex and edge labels and a choice of orientation.}
  \label{fig:example_graphs}
\end{figure}

For the first graph \(G_1\), the cycle space has dimension
\(\beta=2\).  The pendant edges \(e_1,e_9\) and the bridge \(e_5\) do
not occur in any cycle.  A convenient cycle frame is given by
\begin{equation}
  \label{eq:frame_ex1}
  C_1^\top =
  \begin{pmatrix}
    0 & 1 & 1 & 1 & 0 & 0 & 0 & 0 & 0\\
    0 & 0 & 0 & 0 & 0 & 1 & 1 & 1 & 0
  \end{pmatrix}.
\end{equation}
Thus
\begin{equation}
  \label{eq:X_ex1}
  C_1^\top R_1 C_1
  =
  \begin{pmatrix}
    r_2+r_3+r_4 & 0\\
    0 & r_6+r_7+r_8
  \end{pmatrix}.
\end{equation}
The form is diagonal because the two cycles of \(G_1\) are
edge-disjoint.  In particular, each cycle contributes one independent
one-dimensional summand to the inertia of \(\mathcal X_r\).

For the second graph \(G_2\), again \(\beta=2\).  With the edge
numbering and orientations shown in \Cref{fig:example_graphs}, we may
take
\begin{equation}
  \label{eq:frame_ex2}
  C_2^\top =
  \begin{pmatrix}
    1 & -1 & 1 & 0 & 0\\
    0 & 0 & 1 & -1 & 1
  \end{pmatrix}.
\end{equation}
The corresponding matrix of the weighted cycle intersection form is
\begin{equation}
  \label{eq:X_ex2}
  C_2^\top R_2 C_2
  =
  \begin{pmatrix}
    r_1+r_2+r_3 & r_3\\
    r_3 & r_3+r_4+r_5
  \end{pmatrix}.
\end{equation}
Here the two chosen cycles share the edge \(e_3\), and this shared edge
is responsible for the off-diagonal entry \(r_3\).

For the third graph \(G_3\), the cycle space has dimension
\(\beta=3\).  In one choice of cycle frame, the weighted cycle
intersection matrix is
\begin{align}
  \label{eq:X_ex3}
  C_3^\top R_3 C_3
  &=
  \begin{pmatrix}
    r_1+r_2+r_3 & r_3 & r_3\\
    r_3 & r_3+r_4+r_5 & r_3\\
    r_3 & r_3 & r_3+r_6+r_7
  \end{pmatrix} \\ \nonumber
  &=
  \operatorname{diag}(r_1+r_2,r_4+r_5,r_6+r_7)+r_3 \mathbf 1\mathbf 1^\top.
\end{align}
In particular, the cycle intersection form is a rank-1 perturbation of
the diagonal matrix.  This echoes the nodal statistics of such
``watermelon'' graphs, which were shown\footnote{In the context of
  quantum rather than discrete graphs.} in \cite{AloBanBer_em22} to be
a small perturbation of the binomial distribution.

To illustrate the general identity \eqref{eq:det_cographic_basis} for
the determinant, see also \cite[Thm.~2.12]{BroDevFer_siamjam16}, we
compute for \(G_2\),
\begin{equation}
  \label{eq:det_ex2}
  \det(C_2^\top R_2 C_2)
  =
  r_1r_3+r_1r_4+r_1r_5
  +r_2r_3+r_2r_4+r_2r_5
  +r_3r_4+r_3r_5.
\end{equation}
Each monomial here is the product of the weights on the complement of a
spanning tree of \(G_2\).  As a further example, setting all \(r_e=1\) gives
\[
  \det(C_1^\top C_1)=9,\qquad
  \det(C_2^\top C_2)=8,\qquad
  \det(C_3^\top C_3)=20,
\]
the corresponding numbers of spanning trees; this is the Kirchhoff
matrix-tree theorem.

These elementary determinants already encode spectral information.  In
the spectral setting of \Cref{thm:main}, when the weights arise from
an eigenpair with nonzero eigenvector, the zero set of
\(\det(C^\top R C)\) is precisely the locus where the corresponding
eigenvalue is multiple.  Away from this zero set, the sign of the
determinant determines the nodal surplus mod \(2\).

Conversely, one can construct examples with prescribed degeneracies as
follows.  Choose a nonzero vector \(\psi\) and choose nonzero
symmetric off-diagonal entries \(H_{uv}=H_{vu}\) for \(u\sim v\), thus
determining $r$.  Then, for any chosen value of \(\lambda\), define
\[
  H_{uu}
  =
  \lambda-\frac{1}{\psi_u}\sum_{v\sim u}H_{uv}\psi_v .
\]
This ensures that \(H\psi=\lambda\psi\).  Placing the weight
vector \(r\) on the determinantal locus \(\det(C^\top R C)=0\), or on
a higher-codimension locus where \(n_0(C^\top R C)\) is larger,
produces eigenvalues of higher multiplicity.  The reason this
construction is so direct is that the weighted cycle intersection
matrix does not explicitly involve the diagonal entries of \(H\) or
the eigenvalue \(\lambda\), although these are constrained by the
equation \(H\psi=\lambda\psi\).

\subsection{An example of the magnetic Hessian}
\label{ex:mag_Hessian}
  
We now illustrate \Cref{thm:magHessian} with a numerical example.
Consider $H$ strictly supported on the diamond graph $G$, see
\Cref{fig:example_graphs}(middle), given by
\begin{equation}
  \label{eq:H_example}
  H = \begin{pmatrix}
    -1 & 2 & 2 & 0 \\
    2 & 2 & -1 & -1 \\
    2 & -1 & -1 & 4 \\
    0 & -1 & 4 & -3 
  \end{pmatrix}
  \qquad \qquad
  H_\alpha = \begin{pmatrix}
    -1 & 2 e^{i k_1} & 2 & 0 \\
    2 e^{-i k_1} & 2 & -1 & -1 \\
    2 & -1 & -1 & 4 e^{i k_2}\\
    0 & -1 & 4 e^{-i k_2} & -3 
  \end{pmatrix}
\end{equation}
The cycle space of the diamond graph has dimension $\beta=2$ and its
frame can be chosen to be
\begin{equation}
  \label{eq:C_again}
  C^\top= \begin{pmatrix}
    1 & -1 & 1 & 0 & 0 \\
    0 & 0 & 1 & -1 & 1
  \end{pmatrix},
\end{equation}
where the edge numbering and orientation is given in Figure
\ref{fig:example_graphs}.  The
matrices $H_\alpha$ in \eqref{eq:H_example} are parametrized by the cochains
\begin{equation}
  \label{eq:alpha_space}
  \alpha =
  \begin{pmatrix}
    k_1 & 0 & 0 & 0 & k_2
  \end{pmatrix}^\top \in \R^{\vec{E}} = C^1(G,\R),
\end{equation}
where the edges in $\vec{E}$ have been ordered according to their
labels in \Cref{fig:example_graphs}.  The flux coordinates can be
computed by the matrix multiplication,
$C^\top \alpha =
  \begin{pmatrix}
    k_1 & k_2
  \end{pmatrix}^\top
  =: \vec{k}
  \in \R^\beta$.

\begin{figure}
  \centering
  $\nodcnt(\psi_1,H)=1$
  \includegraphics[width=0.3\linewidth]{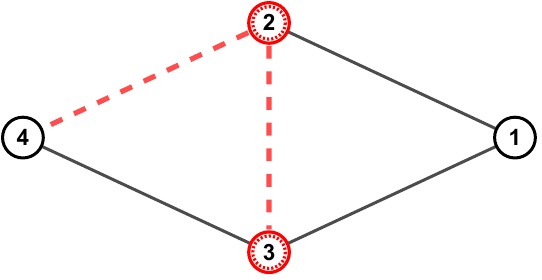}
  \quad
  $\nodcnt(\psi_2,H)=1$
  \includegraphics[width=0.3\linewidth]{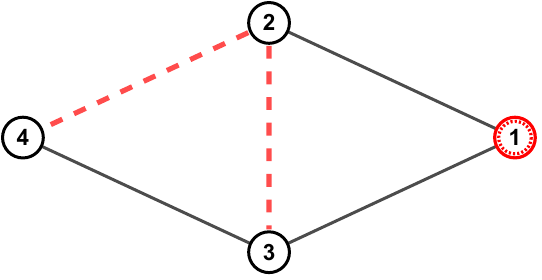} \\[5pt]
  $\nodcnt(\psi_3,H)=3$
  \includegraphics[width=0.3\linewidth]{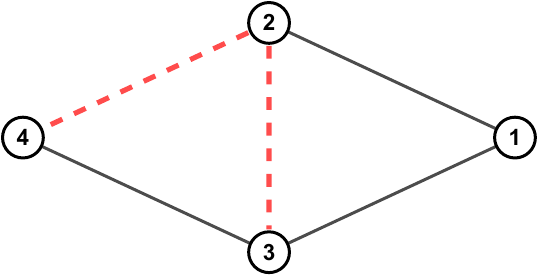}
  \quad
  $\nodcnt(\psi_4,H)=4$
  \includegraphics[width=0.3\linewidth]{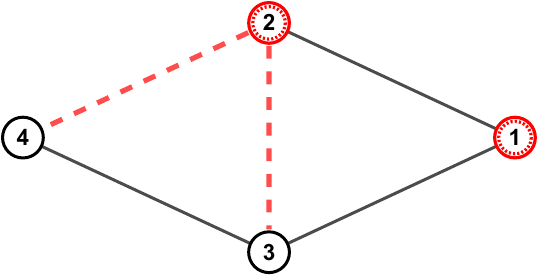}
  \caption{Eigenvector data used for computing the nodal count
    $\nodcnt(\psi_k,H)$, as defined in \eqref{eq:nodal_count_def}.  A
    graph edge $(u,v)$ is colored black (solid) if $H_{u,v}>0$ and red
    (dashed) if $H_{u,v}<0$.  A vertex $u$ is colored black (solid) if
    $\psi_u>0$ and red (composite solid and dashed) if $\psi_u<0$.}
  \label{fig:MHExample}
\end{figure}

The matrix $H$ has four eigenvectors whose signs are summarized in
\Cref{fig:MHExample}.  It is apparent that
\begin{equation}
  \label{eq:nocnt_ex}
  \nodcnt(\psi_1,H)=1,\quad
  \nodcnt(\psi_2,H)=1,\quad
  \nodcnt(\psi_3,H)=3,\quad
  \nodcnt(\psi_4,H)=4. 
\end{equation}
\Cref{fig:MagHam} depicts the eigenvalues of the magnetic Hamiltonian
$H_\alpha$ as a function of the flux coordinates $k_1,k_2$.  We
can see that $(0,0)$ is a saddle point (index 1) except for
$\Lambda_2$, where it is a minimum (index 0), matching
\Cref{cor:nodal_magnetic}, equation \eqref{eq:Morse_index}:
\begin{align}
  \label{eq:MorseLambda_ex1}
  &n_-(\Hess\Lambda_1)=1 = 1 - (1-1),
  &&n_-(\Hess\Lambda_2)=0 = 1 - (2-1),\\
  \label{eq:MorseLambda_ex2}
  &n_-(\Hess\Lambda_3)=1 = 3 - (3-1),
  &&n_-(\Hess\Lambda_4)=1 = 4 - (4-1).
\end{align}

\begin{figure}
  \centering
  \includegraphics[width=0.45\linewidth]{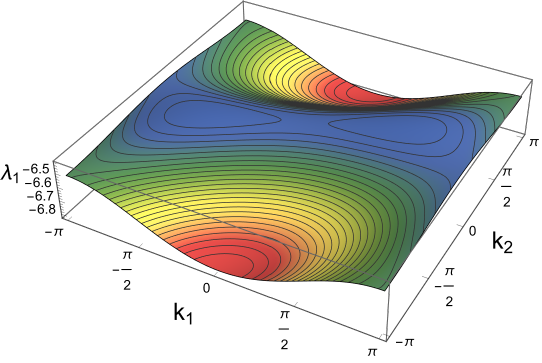}
  \includegraphics[width=0.45\linewidth]{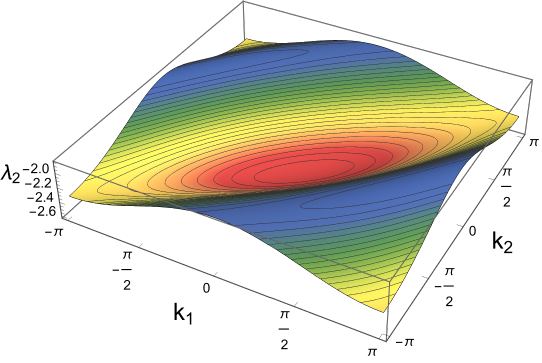}\\
  \includegraphics[width=0.45\linewidth]{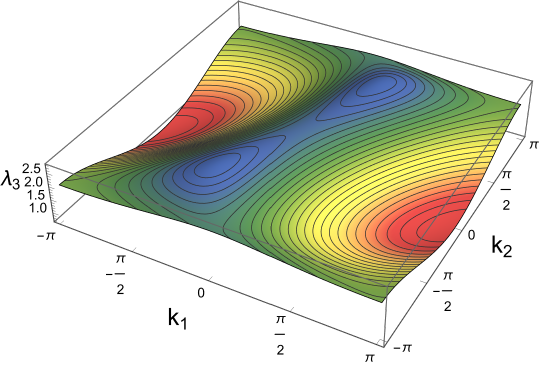}
  \includegraphics[width=0.45\linewidth]{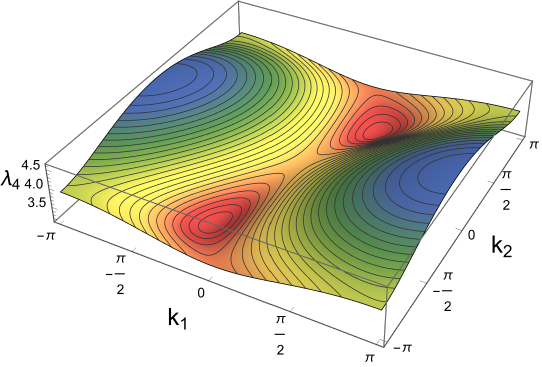}
  \caption{The eigenvalues of the magnetic perturbation Hamiltonian
    as a function of the fluxes $k_1$, $k_2$.}
  \label{fig:MagHam}
\end{figure}

\begin{figure}
  \centering
  \includegraphics[width=0.45\linewidth]{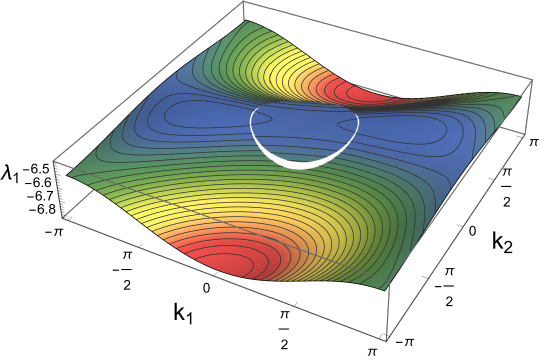}
  \includegraphics[width=0.45\linewidth]{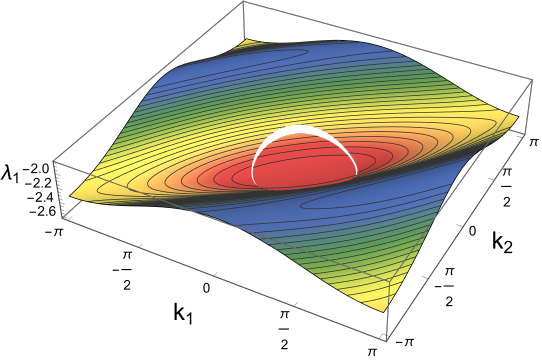}  
  \caption{The eigenvalues $\Lambda_1$ and $\Lambda_2$ of $H_\alpha$,
    equation~\eqref{eq:H_example}, for $k_1^2+k_2^2>1$, and the
    quadratic approximation at $(k_1,k_2)=(0,0)$, shown for $k_1^2+k_2^2<1$, computed using ~\eqref{eq:Hess_flux}.}
  \label{fig:MagHess}
\end{figure}

Finally, the Hessian of $\Lambda_k$ in terms of the flux coordinates
(cf.\ equation~\eqref{eq:Hess_flux_coords}) is
\begin{equation}
  \label{eq:Hess_flux}
  \frac12\Hess \Lambda(\vec{k}_1,\vec{k}_2)
  = \left<\vec{k}_1, \left(C^\top R C\right)^{-1} \vec{k}_2 \right>_{\R^2}.
\end{equation}
For the first two eigenvectors of $H$, the former is given by:
\begin{align}
  \label{eq:intersection_form_ex1}
  &\frac12\Hess \Lambda_1(\vec{k}_1,\vec{k}_2) = \begin{pmatrix}
      -0.0066& 0.0469\\
      0.0469 & -0.1106 \\
    \end{pmatrix},
  &&\frac12\Hess \Lambda_2(\vec{k}_1,\vec{k}_2) = \begin{pmatrix}
      0.1455 & -0.1326 \\
      -0.1326& 0.1861 \\
    \end{pmatrix},
\end{align}
where $R_1$, $R_2$ have been computed from the eigenvectors $\psi_1$,
$\psi_2$ by the usual formula, equation~\eqref{eq:the_r_formula}.  We have computed $\frac12\Hess \Lambda(\vec{k}_1,\vec{k}_2)$ both directly, by numerically differentiation of the eigenvalues, and via \Cref{eq:Hess_flux}. We
remark that $n_{-}(C^\top R_1 C)=1$ and $n_{-}(C^\top R_2 C)=0$, in
agreement with the results in
\eqref{eq:MorseLambda_ex1}-\eqref{eq:MorseLambda_ex2}.

Using the Hessian to compute the quadratic approximation to $\Lambda_1$ and $\Lambda_2$
in the coordinates $\vec{k} = (k_1,k_2)$ results in the graphs in
\Cref{fig:MagHess} (these correspond to the top two graphs in
\Cref{fig:MagHam}).  The quadratic approximation performs remarkably
well, which can be partially explained by the absence of cubic terms
in the Taylor series, due to the symmetry \eqref{eq:real_symm}.

\section{Oscillation and the cycle intersection form}
\label{sec:aux}

In this section we collect some preliminaries and then prove
\Cref{thm:main} using the language of linear algebra; the language of
forms will be used to establish \Cref{thm:magHessian}.

\subsection{Compression, inertia and the Schur complement}
\label{sec:sylvester}

For an operator $T:\R^m \to \R^m$, its \emph{compression} $[T]_V$ to a
subspace $V$ is the composition
\begin{equation}
  \label{eq:compression_diagram}
  V \overset{\iota}{\longrightarrow} \R^m
  \overset{T}{\longrightarrow} \R^m \overset{P}{\longrightarrow}
  V,
\end{equation}
where $\iota$ is the inclusion and $P=\iota^*$ is the orthogonal
projection. If $T$ is symmetric, and $\mathsf{T}$ denotes the 
corresponding quadratic form, then the restriction of $\mathsf{T}$ to $V$ is 
the quadratic form of the compression $[T]_V$.

We say that $U$ is an \emph{orthonormal frame} for the subspace
$V\subset \R^m$ if $U$ is an $m\times n$ matrix whose columns
$\{u_1,\ldots, u_n\}$ give an orthonormal basis of $V$.


With respect to the basis given by the columns of $U$, the matrix
representation of the compression of $T:\R^m \to \R^m$ to $V$ is
\begin{equation}
  \label{eq:matrix_compression}
  [T]_V = U^* T U.
\end{equation}
Naturally, the eigenvalues of $U^* T U$ do not depend on the choice of
the particular orthonormal basis.  Replacing the orthonormal frame $U$
with an arbitrary frame $S$ of the same subspace may change the
eigenvalues of $S^* T S$, but preserves their sign, by Sylvester's law
of inertia \cite[Thm.~4.5.8]{HornJohnson}.


We now state Sylvester's law of inertia in a somewhat more general
form than what is found in textbooks.

\begin{proposition}[Sylvester's law of inertia with a general
  transformation]
  \label{lem:sylvester}
  Let $H$ be $n\times n$ hermitian and let $S$ be
  $n \times m$.  Then
  \begin{align}
    \label{eq:sylvester_ind}
    & n_\pm(S^* H S) = n_\pm\Big( [H]_{\Ran S} \Big),\\
    \label{eq:sylvester_ker}
    & n_0(S^* H S) = n_0\Big( [H]_{\Ran S} \Big) + \dim \ker S.
  \end{align}
  In particular, if $V$ is an arbitrary subspace and $T$ is a linear transformation,
  \begin{equation}
    \label{eq:sylvester_restrictions}
    n_\pm\Big( \big[H\big]_{TV} \Big)
    = n_\pm\Big( \big[T^* H T\big]_V \Big).
  \end{equation}
\end{proposition}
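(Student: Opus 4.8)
The plan is to prove the generalized Sylvester law in two stages: first establish the key identities \eqref{eq:sylvester_ind} and \eqref{eq:sylvester_ker} relating the inertia of $S^* H S$ to that of the compression $[H]_{\Ran S}$, and then derive \eqref{eq:sylvester_restrictions} as a corollary. The central observation is that $S$ factors through its range: writing $V = \Ran S$ and letting $U$ be an orthonormal frame for $V$, every column of $S$ lies in $V = \Ran U$, so there is an $m'\times m$ matrix $B$ (where $m' = \dim V$) with $S = UB$. Since $U$ has orthonormal columns, $B = U^* S$, and $B$ is surjective precisely because $\Ran S = \Ran U$. The point is that $S^* H S = B^*(U^* H U) B = B^*\,[H]_V\, B$, which reduces the problem to the behavior of inertia under the transformation $B$, a matrix of full row rank.

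Second, I would handle the full-row-rank case. Set $M = [H]_V = U^* H U$, an $m' \times m'$ hermitian matrix, so that $S^* H S = B^* M B$ with $B$ surjective. I would argue the inertia equalities \eqref{eq:sylvester_ind} directly: if $\mathsf{M}$ is the quadratic form of $M$ on $\R^{m'}$ and $W \subseteq \R^{m'}$ is a subspace on which $\mathsf{M}$ is negative (resp.\ positive) definite of maximal dimension $n_\pm(M)$, then because $B$ is surjective I can lift $W$ to a subspace $\widetilde W \subseteq \R^m$ of the same dimension with $B\widetilde W = W$, and the form $x \mapsto (Bx)^* M (Bx)$ is negative (resp.\ positive) definite on $\widetilde W$; this gives $n_\pm(B^* M B) \ge n_\pm(M)$. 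For the reverse inequality, a negative-definite subspace for $B^* M B$ maps under $B$ to a subspace on which $\mathsf{M}$ is negative definite of at least the same dimension (injectivity of $B$ restricted to a negative-definite subspace follows because $x$ in the kernel gives $(Bx)^* M (Bx)=0$), yielding $n_\pm(B^* M B) \le n_\pm(M)$. The kernel count \eqref{eq:sylvester_ker} then follows by subtracting from the total dimension: $n_0(B^* M B) = m - n_+ - n_- = (m - m') + n_0(M) = \dim\ker B + n_0(M)$, and $\dim \ker B = \dim\ker S$ since $S = UB$ with $U$ injective, so $\ker S = \ker B$.

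Finally, to obtain \eqref{eq:sylvester_restrictions} I would specialize. Let $W$ be an orthonormal frame for the subspace $V$ appearing there, so $[T^* H T]_V = W^* T^* H T W = (TW)^*H(TW)$. Applying \eqref{eq:sylvester_ind} with $S = TW$ gives $n_\pm\big((TW)^* H (TW)\big) = n_\pm\big([H]_{\Ran(TW)}\big)$, and $\Ran(TW) = T(\Ran W) = T V = TV$, which is exactly the right-hand side of \eqref{eq:sylvester_restrictions}.

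The main obstacle I anticipate is not the inertia bookkeeping, which is standard min–max/definite-subspace reasoning, but rather the careful tracking of kernels and the surjectivity of $B$ in the factorization $S = UB$: one must verify that $\Ran S = \Ran(UB)$ equals $V$ forces $B$ to have full row rank, and that the lifting of definite subspaces through the (non-injective) map $B$ preserves dimension. These are the places where the ``general transformation'' version genuinely extends the textbook statement (which assumes $S$ is injective with $n_0 = 0$), so I would state the factorization lemma cleanly and verify $\ker S = \ker B$ explicitly before invoking the definite-subspace argument.
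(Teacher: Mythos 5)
Your proof is correct, and its core argument is genuinely different from the paper's. The paper proves \eqref{eq:sylvester_ind}--\eqref{eq:sylvester_ker} by reduction to the classical invertible law of inertia: it constructs an invertible $m\times m$ matrix $W$ whose first $s=\dim\Ran S$ columns are $S$-preimages of the columns of an orthonormal frame $U$ of $\Ran S$ and whose remaining columns span $\ker S$, so that $SW=\begin{pmatrix}U&0\end{pmatrix}$ and $W^*S^*HSW=\begin{pmatrix}U^*HU&0\\0&0\end{pmatrix}$; congruence by the invertible $W$ (the textbook theorem) then yields both identities in a single block computation, with $n_\pm$ and $n_0$ read off simultaneously. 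You instead factor $S=UB$ with $B=U^*S$ of full row rank, and prove from scratch, via the maximal-definite-subspace characterization of inertia, that congruence by a surjective matrix preserves $n_\pm$ --- lifting a maximal definite subspace through the surjection for one inequality, pushing forward (using injectivity of $B$ on definite subspaces) for the other --- and then recover \eqref{eq:sylvester_ker} by dimension counting together with $\ker S=\ker B$. Your route is self-contained: it never invokes the textbook theorem (indeed it reproves it, invertible matrices being surjective), and it isolates exactly where surjectivity is needed (only for $n_\pm(B^*MB)\ge n_\pm(M)$; the reverse inequality holds for arbitrary $B$). The paper's route is shorter granted the classical result, and it obtains $n_0$ directly rather than by subtraction from the total dimension. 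For \eqref{eq:sylvester_restrictions} the two arguments agree in substance: both rest on $\Ran(TS)=T\,\Ran S$ and on applying \eqref{eq:sylvester_ind}; you save one application by choosing the frame orthonormal, so that $[T^*HT]_V=(TW)^*H(TW)$ holds by definition of compression. Two trivial touch-ups: the subspace argument should be phrased over $\C$ rather than $\R$, since $H$ is hermitian (nothing in the argument changes), and the classical theorem you contrast with assumes $S$ square and invertible, not merely injective.
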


\begin{proof}
  If $S$ is invertible, $[H]_{\Ran S} = H$ and $\dim\ker
  S = 0$, resulting in the classical law of inertia
  \cite[Thm.~4.5.8]{HornJohnson}.  Taking this as proven, we discuss
  the extension to matrices $S$ that are not necessarily invertible or even
  square.

  Let $s = \dim \Ran S$ and let $n\times s$ matrix $U$ be an
  orthonormal frame for $\Ran S$.  Choose $\{w_1, \ldots, w_s\}$ such
  that $Sw_j = u_j$, the $j$-th column of $U$.  Choose the vectors
  $\{w_{s+1},\ldots, w_m\}$ to be a basis of the space $\ker S$ (the
  dimensions are right by the rank-nullity theorem).  Form the matrix
  $W$ using the vectors $w_j$ as columns which can be easily seen to
  be linearly independent (assume the contrary and apply $S$).  Since
  $W$ is invertible, $S^* H S$ and $W^* S^* H S W$ have identical
  indices $n_-$, $n_+$ and $n_0$, by the ``invertible'' Sylvester's
  theorem.  An explicit computation yields
  \begin{equation}
    W^* S^* H S W
    =
    \begin{pmatrix}
      U^* \\ 0
    \end{pmatrix}
    H
    \begin{pmatrix}
      U & 0
    \end{pmatrix}
    =
    \begin{pmatrix}
      U^* H U & 0 \\
      0 & 0
    \end{pmatrix},
  \end{equation}
  leading to \eqref{eq:sylvester_ind}-\eqref{eq:sylvester_ker}.
  To verify \eqref{eq:sylvester_restrictions}, observe that if $V =
  \Ran S$ then $TV = \Ran TS$ and
  \begin{equation}
    \label{eq:restriction_change}
    n_\pm\Big( \big[T^* H T\big]_V \Big)
    = n_\pm\left(S^*T^* H T S\right)
    = n_\pm\Big( \big[H\big]_{TV} \Big).
  \end{equation}
\end{proof}


\begin{corollary}[Haynsworth inertia additivity formula \cite{Hay_laa68}]
  \label{cor:Haynsworth}
  For a hermitian block matrix
  \begin{equation}
    \label{eq:H_block}
    H =
    \begin{pmatrix}
      A & C \\ C^* & D
    \end{pmatrix}
  \end{equation}
  with invertible $A$,
  \begin{align}
    \label{eq:Haynsworth}
    n_-(H) &= n_-(A) + n_-(H/A),\\
    \label{eq:Haynsworth0}
    n_0(H) &= n_0(A) + n_0\left(H/A\right),
  \end{align}
  where $H/A := D - C^* A^{-1} C$ is the \emph{Schur
    complement} of the block $A$ in $H$.
\end{corollary}

\begin{proof}
  Use \Cref{lem:sylvester} with the invertible
  \begin{equation}
    \label{eq:SforHaynsworth}
    S :=
    \begin{pmatrix}
      I & -A^{-1} C \\
      0 & I
    \end{pmatrix},
  \end{equation}
  which makes $S^*HS$ block diagonal with the blocks $A$ and $H/A$.
\end{proof}

Extensions of \eqref{eq:Haynsworth}-\eqref{eq:Haynsworth0} to
non-invertible $A$ have been proposed in many papers with probably the
most general formula due to Maddocks \cite[Thm.~6.1]{Mad_laa88} (see
also \cite[App.~A]{BerCanCoxMar_paa22}).  We will need its special
case with $A=0$, which is important due to its appearance as the
\emph{bordered Hessian} in Lagrange optimization.  The formula we need
is due to Jongen, M\"obert, R\"uckmann and Tammer
\cite[Thm.~2.1]{Jon+_laa87}, which extends an earlier formula of Han
and Fujiwara \cite[Thm.~3.4]{HanFuj_laa85}.  For completeness we
derive it from \Cref{cor:Haynsworth} using nothing more than the
continuity of eigenvalues under perturbation.

\begin{corollary}[Inertia for bordered Hessians
  \cite{HanFuj_laa85,Jon+_laa87}]
  \label{cor:JMRT}
  Let
  \begin{equation}
    \label{eq:bordered_Hessian}
    M =
    \begin{pmatrix}
      A & C \\
      C^* & 0_\beta
    \end{pmatrix}
  \end{equation}
  be hermitian, with $0_\beta$ denoting zero matrix of size
  $\beta\times\beta$. Then
  \begin{align}
    \label{eq:n-bordered}
    n_-(M)
    &=
    n_-\Big( [A]_{\ker C^*} \Big)
      + \dim \Ran C, \\
    \label{eq:n0bordered}
    n_0(M)
    &=
    n_0\Big( [A]_{\ker C^*} \Big)
    + \beta - \dim \Ran C.
  \end{align}
\end{corollary}

\begin{proof}
  For \(\varepsilon>0\), set
  \[
    M_\varepsilon :=
    \begin{pmatrix}
      A & C \\
      C^* & \varepsilon I
    \end{pmatrix}.
  \]
  Since \(M_\varepsilon\) is a non-negative perturbation of $M$,
  continuity of eigenvalues gives \(n_-(M_\varepsilon)=n_-(M)\) for
  all sufficiently small \(\varepsilon>0\). By \Cref{cor:Haynsworth}
  (with a suitable rearrangement),
  \begin{equation}
    \label{eq:JMRT_eps}
    n_-(M)
    =
    n_-(M_\varepsilon)
    =
    n_-(\varepsilon I) +
    n_-\left(A-\frac1\varepsilon CC^*\right)
    =
    n_-\left(\varepsilon A-CC^*\right).
  \end{equation}
  Writing the last matrix in block form with respect to the decomposition
  \(\ker C^*\oplus \Ran C\),
  \[
    \varepsilon A-CC^*
    =
    \begin{pmatrix}
      \varepsilon [A]_{\ker C^*} & \varepsilon A_1 \\
      \varepsilon A_1^* & -C_2+\varepsilon A_2
    \end{pmatrix},
  \]
  where \(C_2 = \left[CC^*\right]_{\Ran C}>0\). Applying
  \Cref{cor:Haynsworth} once more gives
  \begin{equation}
    \label{eq:Haynsagain}
    n_-\left(\varepsilon A-CC^*\right)
    =
    n_-(-C_2+\varepsilon A_2)
    +
    n_-\left(
    [A]_{\ker C^*}
    +
    \varepsilon A_1 (C_2-\varepsilon A_2)^{-1} A_1^*
    \right).
  \end{equation}
  For small \(\varepsilon>0\), the block \(-C_2+\varepsilon A_2\) is
  negative definite, while the second term in \eqref{eq:Haynsagain} is
  a small non-negative perturbation of \([A]_{\ker C^*}\).  We arrive at
  \begin{equation}
    \label{eq:n-bordered-proof}
    n_-(M) =
    \dim \Ran C
    +
    n_-\left([A]_{\ker C^*}\right),
  \end{equation}
  which is \eqref{eq:n-bordered}.  To prove \eqref{eq:n0bordered},
  apply \eqref{eq:n-bordered-proof} to \(-M\), getting
  \[
    n_+(M)=n_+\Big( [A]_{\ker C^*} \Big)+\dim \Ran C .
  \]
  Writing the size of the matrix \(M\) as
  \(\dim\ker C^*+\dim\Ran C+\beta\), we get
  \begin{align*}
    n_0(M)
    &=
    \left(\dim\ker C^*+\dim\Ran C+\beta\right)
    - n_-(M) - n_+(M) \\
    &=
    \bigg(\dim\ker C^*
    -
    n_-\big([A]_{\ker C^*}\big)
    -
    n_+\big([A]_{\ker C^*}\big)\bigg)
    + \beta-\dim\Ran C \\
    &=
    n_0\big([A]_{\ker C^*}\big)+\beta-\dim\Ran C .
  \end{align*}
\end{proof}

\subsection{Coboundary operator and the ground-state transform}
\label{sec:coboundary}

Define the \emph{coboundary operator} $d: \R^V \to \R^\E$ by
\begin{equation}
  \label{eq:cbd_op_def}
  (d\theta)_{(u,v)} = \theta_v - \theta_u.
\end{equation}
In graph theory $d$ is usually called the \emph{oriented incidence
  matrix}, but in \Cref{sec:chains} we will identify the spaces
$\R^V$ and $\R^\E$ with the spaces of cochains, $C^0(G, \R)$ and
$C^1(G, \R)$, which justifies the name ``coboundary'' for $d$.

The adjoint of $d$ with respect to the standard inner products on
$\R^V$ and $\R^\E$ is
\begin{equation}
  \label{eq:bd_op_def}
  d^*: \R^\E \to \R^V,
  \qquad
  (d^* f)_u = \sum_{v: (v,u)\in \E} f_{(v,u)} - \sum_{w: (u,w)
    \in \E} f_{(u,w)}.
\end{equation}
Comparing \eqref{eq:bd_op_def} with \eqref{eq:flow_condition} and
using the fundamental subspaces theorem, we have
\begin{equation}
  \label{eq:fund_subspaces}
  Z = \Ran C = \ker d^*,
  \qquad
  Z^\perp = \ker C^\top = \Ran d = d \R^V,
\end{equation}
where $C$ is a frame for the cycle space $Z$.  The space $Z^\perp = d
\R^V$ is commonly called the \emph{cut space} of the graph.

The following lemma, which can be traced through
\cite[Sec.~4]{Col_apde13} to \cite[Proof of Thm.~(2,2)]{Fie_cmj75},
may be viewed as a generalization of the fact that $d^* d$ is the
standard graph Laplacian.  

\begin{lemma}[Ground-state transform]
  \label{lem:ground_state_transform}
  Let \(G=(V,E)\) be a simple graph with a fixed orientation
  \(\E=\vec E\), and let real symmetric $H$ be supported on \(G\). Let
  \(\psi\) be a real eigenvector of \(H\) with eigenvalue \(\lambda\), and
  let \(\Psi\) be the operator of multiplication by \(\psi\) on
  \(\R^V\), i.e.\ the \(|V|\times |V|\) diagonal matrix with entries
  \(\psi_u\). Define the diagonal matrix \(\Phi\) on \(\R^\E\) by
  \[
    \Phi_{e,e} = -\psi_u H_{u,v}\psi_v,
    \qquad e=(u,v)\in \E .
  \]
  Then
  \begin{equation}
    \label{eq:ground_state_transform}
    d^* \Phi d = \Psi^* (H-\lambda) \Psi.
  \end{equation}
\end{lemma}

\begin{remark}
  The name ``ground-state transform'' is borrowed from the analysis of
  Schr\"odinger operators and quantum field theory
  \cite[Sec.~4]{Gro_ajm75}, where it denotes the conjugation of $H$ by
  its positive ground-state eigenfunction .  In
  \Cref{lem:ground_state_transform}, $\psi$ is an arbitrary
  eigenvector, not necessarily a ground state, and its entries need
  not be non-zero.  However, it is essential that $H$ is real.
\end{remark}

\begin{proof}
  An explicit computation shows that for $u \neq v$,
  $(d^* \Phi d)_{u,v} = \psi_u H_{u,v} \psi_v$.  For the diagonal
  entries we use $H\psi=\lambda\psi$ to compute
  \begin{equation*}
    (d^* \Phi d)_{u,u} = -\sum_{v\sim u} \psi_u H_{u,v} \psi_v
    = \psi_u \left( H_{u,u}\psi_u - \sum_{v \in V} H_{u,v} \psi_v 
    \right)
    = \psi_u \left(H_{u,u} \psi_u  - \lambda \psi_u\right).
  \end{equation*}
  These are precisely the diagonal entries of
  \(\Psi^*(H-\lambda)\Psi\), completing the proof.
\end{proof}

\begin{corollary}[Inertia on the cut space]
  \label{cor:ind_Phi}
  With the notation as in \Cref{lem:ground_state_transform} and assuming
  additionally that the eigenvector $\psi$ has no zero entries,
  \begin{align}
    \label{eq:Phi_ind}
    &n_-\left( [\Phi]_{d \R^V} \right) = n_-(H-\lambda), \\
    \label{eq:Phi_ker}
    &n_0\left( [\Phi]_{d \R^V} \right) = n_0(H-\lambda) - c,
  \end{align}
  where $c=c(G)$ is the number of connected components of the graph $G$.
\end{corollary}

\begin{proof}    
  We combine \Cref{lem:ground_state_transform} and \Cref{lem:sylvester} to
  obtain
  \begin{equation}
    \label{eq:Phi_ind_comp}
    n_-\left( [\Phi]_{\Ran d} \right) = n_-\left( d^* \Phi d \right)
    = n_-\left(\Psi^* (H-\lambda) \Psi \right) = n_-(H-\lambda),
  \end{equation}
  since $\Psi$ is invertible.  Similarly, 
  \begin{equation}
    \label{eq:Phi_ker_comp}
    n_0\left( [\Phi]_{\Ran d} \right)
    = n_0\left(d^* \Phi d\right) - \dim \ker d
    = n_0\left(\Psi^* (H-\lambda) \Psi \right) - c = n_0(H-\lambda) - c,
  \end{equation}
  where we used that $\dim \ker d$ is the number of connected
  components of the graph $G$.
\end{proof}

\subsection{Proof of \texorpdfstring{\Cref{thm:main}}{the main theorem}}
\label{sec:proof_schur}

The idea of the proof is to compare the results of two different
applications of the Haynsworth identity to the matrix
\begin{equation}
  \label{eq:block_matrix}
  M :=
  \begin{pmatrix}
    \Phi & C \\
    C^\top & 0_\beta
  \end{pmatrix},
\end{equation}
where $0_\beta$ is the $\beta \times \beta$ zero matrix, and $C$ is a
frame for the cycle space $Z$.

\begin{proof}[Proof of \Cref{thm:main}]
  Since \(\psi\) has no zero entries and \(H\) is strictly supported
  on \(G\), the diagonal matrix \(\Phi\) is invertible.  We now apply
  the Haynsworth identity, \Cref{cor:Haynsworth}, to the matrix $M$
  with the Schur complement $M/\Phi = 0_\beta - C^\top \Phi^{-1} C$,
  \begin{align}
    \label{eq:schur1}
    &n_-(M) = n_-(\Phi) + n_-\left(- C^\top \Phi^{-1} C \right), \\
    \label{eq:schur1_ker}
    &n_0(M) = n_0(\Phi) + n_0\left(- C^\top \Phi^{-1} C \right)
      =  n_0\left(C^\top \Phi^{-1} C \right).
  \end{align}
  
  Next, we use \Cref{cor:JMRT} with $\dim \Ran C = \beta$,
  \begin{align}
    \label{eq:Han_Fujiwara}
    n_-(M)
    &= n_-\left( [\Phi]_{\ker C^\top} \right) + \dim\Ran C \\
    \nonumber
    &=  n_-\left( [\Phi]_{d\R^V} \right) + \beta
      = n_-(H-\lambda) +\beta, \\
    \label{eq:Han_Fujiwara_ker}
    n_0(M)
    &= n_0\left( [\Phi]_{\ker C^\top} \right) + \beta - \dim\Ran C \\
    \nonumber
    &= n_0\left( [\Phi]_{d\R^V} \right)
      = n_0(H-\lambda) - c.
  \end{align}
  To evaluate \eqref{eq:Han_Fujiwara} and \eqref{eq:Han_Fujiwara_ker} we
  used \eqref{eq:Phi_ind} and \eqref{eq:Phi_ker}.
  
  Comparing \eqref{eq:Han_Fujiwara_ker} and \eqref{eq:schur1_ker}, we
  conclude
  \begin{equation}
    \label{eq:n0computed}
    n_0\left(C^\top \Phi^{-1} C \right) = n_0(H-\lambda) - c.
  \end{equation}
  Comparing \eqref{eq:Han_Fujiwara} and
  \eqref{eq:schur1}, we get
  \begin{align}
    \label{eq:and_done}
    n_-(H-\lambda) +\beta
    &= n_-(\Phi) + n_-\left(- C^\top \Phi^{-1} C \right) \\
    &= n_-(\Phi) + \beta - n_-\left(C^\top \Phi^{-1} C \right)
      - n_0\left(C^\top \Phi^{-1} C \right).
  \end{align}
  Rearranging, we get
  \begin{equation}
    \label{eq:main_formula_other}
    n_-(\Phi)
    = n_-(H-\lambda)
    + n_0\left(C^\top \Phi^{-1} C \right)
    + n_-\left(C^\top \Phi^{-1} C \right).
  \end{equation}
  By definition, $\nodcnt(\psi,H) = n_-(\Phi)$.  We now use
  \eqref{eq:n0computed} and
  $k^\uparrow := n_-(H-\lambda) + n_0(H-\lambda)$, to arrive to
  \begin{equation}
    \label{eq:the_end}
    \nodcnt(\psi,H)
    = k^\uparrow - c + n_-\left(C^\top \Phi^{-1} C \right).
  \end{equation}
  
  Since $\Phi^{-1}=R$, where $R$ is the diagonal matrix of the weights $r_e
  = - \left(\psi_u H_{u,v} \psi_v\right)^{-1}$, the matrix $C^\top \Phi^{-1} C =
  C^\top R C$ is the matrix representation of the form $\mathcal{X}_r$
  in the cycle basis specified by $C$.  Equations \eqref{eq:n0computed}
  and \eqref{eq:the_end} are therefore equivalent to
  \eqref{eq:main_formula_nullity} and \eqref{eq:main_formula_surplus}.
\end{proof}

\section{Magnetic Hessian and the nodal-magnetic theorem}
\label{sec:magnetic}

In this section we explore the relationship between the \emph{weighted
  cycle intersection form} and the Hessian of the eigenvalue
$\lambda_k$ viewed as a function on the \emph{torus of magnetic
  perturbations}, linking our results with the previous studies
\cite{AloBerGor_prep25, AloGor_jst23,AloGor_prep24,Ber_apde13,Col_apde13}.

\subsection{Chains and cochains on a graph}
\label{sec:chains}

The space $C_0(G,\R)$ of
$0$-chains consists of formal linear combinations of vertices.
The space $C_1(G,\R)$ of 1-chains consists of formal linear
combinations of edges $e = (u,v)\in \vec{E}$. The
boundary map $\partial : C_1(G,\R) \to C_0(G,\R)$ is defined via
\begin{equation}
  \label{eq:boundary_map}
  \partial e = (v) - (u), \qquad \text{where }e = (u,v)\in\E,
\end{equation}
extended by linearity.
The kernel of $\partial$ consists of {\em cycles} and, since there are no
2-chains, coincides with the first homology group
\begin{equation}
  \label{eq:homH1}
  H_1(G, \R) = \ker \partial / \Ran 0
  = \ker \partial = Z.  
\end{equation}

The spaces dual to $C_0(G,\R)$ and $C_1(G,\R)$ are the spaces of
cochains $C^0(G,\R)$ and $C^1(G,\R)$ (or 0-forms and 1-forms).  The
space $C^0(G,\R)$ can be viewed as the space of maps from $V$ to $\R$,
or $\R^V$.  The space $C^1(G,\R)$ is the space of maps from
\emph{oriented} edges $\vec{E}$ to $\R$.  The duality pairing is
\begin{equation}
  \label{eq:pairingC1}
  \alpha \cdot y = \sum_{e\in\vec{E}} \alpha(e) y_e,
  \qquad
  \text{where } C^1(G,\R) \ni \alpha : e\mapsto \alpha(e),\ \ 
  y = \sum_{e\in\vec{E}} y_e\, e \in C_1(G,\R).
\end{equation}
The pairing between $C^0$ and $C_0$ is defined analogously.


With these pairings, the operator dual to $\partial$ is the coboundary
operator\footnote{Under the natural identification of both $C_0$ and
  $C^0$ with $\R^V$, and both $C_1$ and $C^1$ with $\R^E$, the duality
  pairing becomes the Euclidean inner porduct.  The action of $d$ is
  then given by \eqref{eq:cbd_op_def} and the action of $\partial$ is
  its adjoint $d^*$ in \eqref{eq:bd_op_def}.}
$d: C^0(G,\R) \to C^1(G,\R)$, easily verified to be
\begin{equation}
  \label{eq:coboundary_def}
  (d \theta)(e) = \theta(v) - \theta(u),
  \quad\text{where }e = (u,v)\in\E,
\end{equation}
same as defined in \eqref{eq:cbd_op_def}.  Because $C^2(G, \R) = 0$,
the first cohomology is
\begin{equation}
  \label{eq:cohomH1}
  H^1(G, \R) = \ker 0 / \Ran d
  = C^1(G,\R) / d\R^V.
\end{equation}
Since $\ker \partial$ is the annihilator of $\Ran d$, two cochains
$\alpha_1,\alpha_2 \in C^1(G, \R)$ belong to the same cohomology class
if and only if they take the same value on every cycle
$\gamma \in H_1(G,\R) \subset C_1(G,\R)$.  In other words,
\begin{equation}
  \label{eq:H_duality}
  [\alpha_1] = [\alpha_2]
  \quad \Longleftrightarrow \quad
  \alpha_1\cdot\gamma = \alpha_2 \cdot \gamma
  \quad \text{for any }\gamma \in H_1(G,\R),  
\end{equation}
where $[\alpha]\in H^1(G,\R)$ denotes the equivalence class of
$\alpha\in C^1(G,\R)$ and $\cdot$ is the duality pairing
\eqref{eq:pairingC1}.

Once a frame \(C\) or, equivalently, a basis
$\{\gamma_1, \ldots, \gamma_\beta\}$, has been chosen for the cycle
space \(Z=H_1(G,\R)\), it determines coordinates on \(H^1(G,\R)\). The
\emph{flux coordinates} of a cohomology class
\([\alpha]\in H^1(G,\R)\) are
\begin{equation}
  \label{eq:flux_coord_def}
  \alpha \cdot C
  := \big(\alpha\cdot \gamma_1\ \cdots\ \alpha\cdot \gamma_\beta\big).
\end{equation}
By \eqref{eq:H_duality}, these coordinates are well-defined and
separate cohomology classes. In other words, the pairing
between \(C^1\) and \(C_1\) descends to a non-degenerate pairing
between \(H^1(G,\R)\) and \(H_1(G,\R)\), and \eqref{eq:flux_coord_def}
are the coordinates in the dual basis.

Finally, while there is no \emph{canonical} identification of
$H^1(G, \R)$ with a subspace of $C^1(G,\R)$, in \Cref{lem:Hodge_flat}
below we will identify a subspace particularly suitable for the
calculation of the Hessian of the eigenvalue $\lambda_k$.

\subsection{Tori of phase and magnetic perturbations}
\label{sec:phase_pert}

Recall that an $n \times n$ matrix $H$ is {\em supported on $G$}
(resp.~{\em strictly supported on $G$}) if, for any $u \ne v$,
$H_{u,v} \ne 0 \implies u\sim v$
(resp.~$H_{u,v} \ne 0 \iff  u\sim v$).  Associated to a graph $G$
there are vector spaces of matrices:
\begin{itemize}
\item $\cH (G)$ is the set of \emph{complex hermitian} matrices supported on $G$;
\item $\cS(G)\subset \cH (G)$ is the set of \emph{real symmetric} matrices supported on $G$;
\item $\cA(G)$ is the set of \emph{real antisymmetric} matrices supported on $G$.
\end{itemize}

 Assume the real symmetric matrix $H\in\cS(G)$ is \emph{strictly}
 supported on $G$.  The \emph{torus of phase perturbations} of $H$ is
 the family of matrices in $\cH(G)$ given by
\begin{equation}
  \label{eq:Mh_def}
  \cP_H := \{H_A\in\cH(G) \colon
  (H_A)_{u,v} := e^{ia_{u,v}} H_{u,v}, \ 
  A = \left(a_{u,v}\right) \in \cA(G)\}.
\end{equation}
Topologically $\cP_H$ is a torus, since each $a_{rs}$ is defined only
modulo $2\pi$, with the covering space $\cA(G)$.  It is a
differentiable manifold with the tangent space naturally
identified with $\cA(G)$.

Real antisymmetric matrices $\cA(G)$ on the graph $G$ are naturally
identified with the cochains $C^1(G,\R)$, via
$C^1(G,\R) \ni \alpha \mapsto A =(a_{u,v}) \in \cA(G)$ with the matrix
elements
\begin{equation}
  \label{eq:1formA}
  a_{u,v} =
  \begin{cases}
    \alpha(e), & e = (u,v) \in \vec{E},\\
    -\alpha(e), & e = (v,u) \in \vec{E}, \\
    0, & u\not\sim v.
  \end{cases}
\end{equation}
With this identification in mind, we will use the notation $H_\alpha$
to mean the corresponding phase perturbation $H_A$ defined in
\eqref{eq:Mh_def}.

\begin{lemma}
  \label{lem:gauge}
  The matrices $H_{\alpha}$ and $H_{\alpha+d\theta}$ are unitarily
  equivalent for any $\theta \in \R^V$.  Consequently, the eigenvalue
  function $\Lambda(\alpha) = \lambda_k(H_\alpha)$ depends only on the
  cohomology class $[\alpha]\in H^1(G,\R)$ of $\alpha \in C^1(G, \R)$.
\end{lemma}

\begin{proof}
  Direct computation shows that
  \begin{equation}
    \label{eq:gauge_equiv_H}
    H_{\alpha+d\theta} = e^{-i\theta} H_\alpha e^{i\theta},
  \end{equation}
  where $e^{i\theta}$ is the diagonal matrix
  $\diag\left(e^{i\theta_1}, e^{i\theta_2},\cdots, e^{i\theta_n}\right)$.
\end{proof}

Taking the quotient of $\cP_H$ by this equivalence we obtain the
\emph{torus of magnetic perturbations} of $H$, which is discussed in
detail in, e.g., \cite[Sec.~5]{AloBerGor_prep25}.

\subsection{Musical isomorphisms and a Hodge-type splitting}
\label{sec:musical_Hodge}

We now recall the \emph{musical isomorphisms}, starting with ``flat''.
Let $\mathcal{B}$ be a non-degenerate bilinear form on a vector space
$V$, and let $V^*$ denote the dual space.  The \emph{flat} isomorphism
$v\mapsto v^\flat$ between $V$ and $V^*$ is defined by
\begin{equation}
  \label{eq:flat_def}
  v^\flat \cdot u = \mathcal{B}(v, u),
\end{equation}
where $\cdot$ denotes the duality pairing between $V^*$ and $V$.

The inverse of the flat isomorphism is the \emph{sharp} isomorphism $V^*
\to V$, defined by
\begin{equation}
  \label{eq:sharp_def}
  f \mapsto f^\sharp,
  \quad \text{such that} \quad
  f\cdot u = \mathcal{B}(f^\sharp, u)
  \quad \text{for all } u\in V.
\end{equation}
The \emph{dual form} $\mathcal{B}^\sharp$ on $V^*$
is then defined as
\begin{align}
  \label{eq:dual_form_def}
  \mathcal{B}^\sharp(f_1, f_2)
  &= \mathcal{B}(f_1^\sharp, f_2^\sharp)
  \\
  \label{eq:dual_form_def_alt}
  &= f_1 \cdot f_2^\sharp,
\end{align}
where the second form is a consequence of \eqref{eq:sharp_def}.
If $\mathcal{B}$ has a matrix $B$ in a certain basis, the matrix of
$\mathcal{B}^\sharp$ in the dual basis is simply $B^{-1}$.

If the space $V$ is equipped with an inner product
$\left<\cdot, \cdot\right>$ (i.e., a non-degenerate bilinear form
which is in addition positive definite), then the corresponding
musical isomorphisms are just the Riesz representations; for example,
$v^\flat$ is the functional $u \mapsto \left<v,u\right>$.

To give a preview why the cycle intersection form (or, rather, its
dual) arises in computation of the Hessian of the eigenvalue function
$\Lambda(\alpha)$, we have the following lemma.
\begin{lemma}
  \label{lem:HessQF}
  For any $\psi$ with non-zero entries and real symmetric $H$ strictly
  supported on $G$,
  \begin{equation}
    \label{eq:HessQF}
    \frac12\frac{\partial^2}{\partial t_1 \partial t_2}
    \left<\psi, H_{t_1\alpha_1+t_2\alpha_2}\psi\right>\Big|_{t_1,t_2=0}
    = \mathcal{R}^\sharp(\alpha_1, \alpha_2),
  \end{equation}
  where $\mathcal{R}^\sharp$ is the dual form to $\mathcal{R}$ defined
  in \eqref{eq:Rform_def} with the coefficients
  $r_e = -\left(\psi_u H_{u,v} \psi_v\right)^{-1}$, $e=(u,v)\in\E$, as in
  \Cref{thm:magHessian}.
\end{lemma}

\begin{remark}
  \label{rem:HessQF}
  Despite similarities with the main claim of~\Cref{thm:magHessian},
  equation \eqref{eq:Hess}, the claim in \eqref{eq:HessQF} is
  different (and much easier to prove): the vector $\psi$ is held
  fixed and the right-hand side depends on the entirety of
  $\alpha_{1,2}$ rather than their equivalence classes.
\end{remark}

\begin{proof}
  This is an explicit computation,
  \begin{align*}
    &\frac12\frac{\partial^2}{\partial t_1 \partial t_2}
      \left<\psi, H_{t_1\alpha_1+t_2\alpha_2}\psi\right>\Big|_{t_1,t_2=0}\\
    &\quad
      = \frac12\frac{\partial^2}{\partial t_1 \partial t_2} \left[
      \sum_{e=(u,v)\in \E} \Big(
      \psi_u e^{it_1 \alpha_1(e)+it_2 \alpha_2(e)}H_{u,v} \psi_v
      +
      \psi_v e^{-it_1\alpha_1(e)-it_2\alpha_2(e)}H_{v,u}\psi_u
      \Big)\right]_{t_1,t_2=0}\\
    &\quad
      = \sum_{e=(u,v)\in \E} \alpha_1(e) \left(-\psi_u H_{u,v}\psi_v\right) \alpha_2(e)
      = \sum_{e\in \E} \alpha_1(e) \frac{1}{r_e} \alpha_2(e)
      = \mathcal{R}^\sharp(\alpha_1, \alpha_2).
  \end{align*}
\end{proof}

\begin{lemma}[Hodge-type decomposition]
  \label{lem:Hodge_flat}
  For any non-degenerate symmetric bilinear form $\mathcal{B}$ on $C_1(G,\R)$
  whose restriction to $Z=H_1(G,\R)\subset C_1(G,\R)$ is also
  non-degenerate, one has the direct sum decomposition
  \begin{equation}
    \label{eq:Hodge_flat}
    C^1(G,\R) = Z^\flat \oplus d\R^V.
  \end{equation}
\end{lemma}

\begin{remark}
  \label{rem:Hodge}
  This lemma shows that a Hodge-type decomposition may be arranged
  with respect to any non-degenerate symmetric bilinear form, not
  necessarily an inner product.  In fact, it can be shown that the sum
  is ``orthogonal'' with respect to the form, in the sense that
  \begin{equation}
    \label{eq:Hodge_orth}
    \mathcal{B}^\sharp(d\theta, \gamma^\flat) = 0
    \qquad
    \text{for any }
    \gamma^\flat\in Z^\flat, \theta \in \R^V.
  \end{equation}
\end{remark}

\begin{remark}
  \label{rem:Zflat_coords}
  Below we will set $\mathcal{B} = \mathcal{R}$, given by
  \eqref{eq:Rform_def}.  In coordinates, the space $Z^\flat$ is then
  given by $Z^\flat = R^{-1} Z$.
\end{remark}

\begin{proof}
  We start by dimension counting.  Recall that
  $C^1(G,\R) = \R^{\vec{E}}$, therefore $\dim C^1(G,\R) = |\E|$.  On
  the other hand, $v \mapsto v^\flat$ is an isomorphism (because the
  underlying form is non-degenerate), therefore
  $\dim Z^\flat = \dim Z = \beta = |\E| - |V| + c(G)$.  The dimension
  of $d\R^V$ is $\dim(d\R^V) = |V|-\dim\ker d = |V| - c(G)$, where
  $c(G)$ is the number of connected components of the graph $G$.
  Since the dimensions in \eqref{eq:Hodge_flat} agree, we only need to
  show that $Z^\flat \cap d\R^V = \{0\}$.

  Assume the contrary: $g^\flat = d \theta$ for some $\theta \in \R^V$
  and some cycle $g\in Z$.  Then, for an arbitrary cycle $\gamma \in
  Z$, \eqref{eq:flat_def} implies
  \begin{equation}
    \label{eq:no_intersection}
    \mathcal{B}(g, \gamma) = g^\flat \cdot \gamma = d\theta \cdot
    \gamma
    = \theta \cdot \partial \gamma = \theta \cdot 0 = 0.
  \end{equation}
  Since the form $\mathcal{B}$ was assumed to be non-degenerate even
  when restricted to $Z$, we conclude that $g=0$.
\end{proof}

The decomposition \eqref{eq:Hodge_flat} gives a \(\mathcal B\)-dependent
choice of representative of each cohomology class, namely an isomorphism
$H^1(G, \R) \simeq Z^\flat$.  When the form used is the form
$\mathcal{R}$ (whose restriction to cycles is the cycle intersection
form), we will call this isomorphism the \emph{Colin de Verdi\`ere
  gauge (CdV gauge)}, after its use in \cite{Col_apde13}.

\begin{lemma}
  \label{lem:gauge_useful}
  Let $\psi$ be an eigenvector of $H$ with non-zero entries and let
  $\mathcal{R}$ be the corresponding non-degenerate form on
  $C_1(G, \R)$.  Then for any $\alpha \in C^1(G,\R)$,
  \begin{equation}
    \label{eq:gauge_useful}
    \Psi \left.\frac{d}{dt} H_{t\alpha} \psi\right|_{t=0} = i \partial \alpha^\sharp,
  \end{equation}
  where $\Psi$ is the diagonal matrix with entries $\psi_v$ and
  $\partial$ is the boundary map \eqref{eq:boundary_map}.
  
  In particular, if $\gamma$ is a cycle ($\gamma \in Z$), then
  \begin{equation}
    \label{eq:gauge_useful0}
    \left.\frac{d}{dt} H_{t \gamma^\flat} \psi\right|_{t=0} =0.
  \end{equation}
\end{lemma}

\begin{remark}
  \label{rem:eigenvector_pert}
  From \eqref{eq:gauge_useful0} it follows that, for a simple
  eigenvalue, the corresponding normalized eigenvector branch may be
  chosen so that its first derivative vanishes for perturbations in
  the CdV gauge (tangent to \(Z^\flat\)).
\end{remark}

\begin{proof}
  We will establish the following ``weak'' formulation of
  \eqref{eq:gauge_useful},
  \begin{equation}
    \label{eq:weak_useful}
    \frac{d}{dt} \left<\xi, H_{t\alpha} \psi \right>
    = i \left(\Psi^{-1}\xi\right) \cdot \partial\alpha^\sharp,
    \qquad \text{for all }\xi \in \R^V,
  \end{equation}
  which is equivalent to \eqref{eq:gauge_useful} because $\Psi$ is
  invertible.  Here and below all derivatives are evaluated at $t=0$.
  Recalling that $r_e = -\left(\psi_u H_{u,v} \psi_v\right)^{-1}$, we compute
  \begin{equation*}
    \label{eq:ddt1}
    \frac{d}{dt} \left<\xi, H_{t\alpha} \psi \right>
    = \sum_{u\sim v}\xi_u \frac{d}{dt}\left(H_{u,v}
      e^{it\alpha_{u,v}}\right) \psi_v
    = i \sum_{u\sim v} \frac{\xi_u}{\psi_u}
    \big({\psi_u H_{u,v} \psi_v} \big) \alpha_{u,v}
    = -i \sum_{u\sim v} \frac{\xi_u}{\psi_u} \frac{1}{r_e} \alpha_{u,v}.
  \end{equation*}
  Grouping together the contributions from \((u,v)\) and \((v,u)\),
  for each \(e=(u,v)\in\vec E\), we obtain
  \begin{equation}
    \label{eq:ddt2}
    \frac{d}{dt} \left<\xi, H_{t\alpha} \psi \right>
    = i \sum_{e=(u,v)\in \vec{E}}
    \left(\frac{\xi_v}{\psi_v} - \frac{\xi_u}{\psi_u}\right)
    \frac1{r_e} \alpha_e
    = i \mathcal{R}^\sharp \left( d\left(\Psi^{-1} \xi\right),
      \alpha\right),
  \end{equation}
  where the sign difference arises due to $\alpha_{u,v}$ being
  antisymmetric.  Apply the definition of the dual form,
  \eqref{eq:dual_form_def_alt}, and the duality of $d$ and $\partial$
  to finish the proof:
  \begin{equation}
    \label{eq:ddt3}
    \mathcal{R}^\sharp \left( d\left(\Psi^{-1} \xi\right),
      \alpha\right)
    = d\left(\Psi^{-1} \xi\right) \cdot \alpha^\sharp
    = \left(\Psi^{-1} \xi\right) \cdot \partial \alpha^\sharp.
  \end{equation}
\end{proof}

\subsection{Proof of  \texorpdfstring{\Cref{thm:magHessian}}{the
    magnetic Hessian formulas}} 
\label{sec:mag_Hessian_proofs}

\begin{proof}[Proof of \Cref{thm:magHessian}]
  The computation proceeds --- and goes one step further --- along the
  standard path \cite{Col_apde13}, explained in detail in
  \cite[\S~5.4]{AloGor_jst23}.  We first point out that by classical
  perturbation theory results
  \cite{Kato_perturbation,Baumgartel_PertTheory}, the eigenvalue
  $\lambda_k$ and the corresponding eigenvector are analytic in the
  neighborhood of a point where the eigenvalue is simple.  We
  therefore have (at $\alpha=0$) the well defined linear functional
  $d\Lambda(0) : C^1(G,\R) \to \R$ and the bilinear form
  $\Hess \Lambda(0) : C^1(G,\R) \times C^1(G,\R) \to \R$,
  \begin{equation}
    \label{eq:dLambda_def}
    d\Lambda : \alpha \mapsto
    \left. \frac{\partial}{\partial t} 
      \Lambda(t\alpha) \right|_{t=0},
    \qquad
    \Hess\Lambda : (\alpha_1,\alpha_2)
    \mapsto
    \left. \frac{\partial^2}{\partial t_1 \partial t_2} 
      \Lambda(t_1\alpha_1+t_2\alpha_2) \right|_{t_1,t_2=0}.
  \end{equation}

  Since $H_\alpha$ is hermitian for all $\alpha$, we have
  \begin{equation}
    \label{eq:real_symm}
    \lambda_k\big(H_{-\alpha}\big) =
    \lambda_k\left(\overline{H_\alpha}\right) =
    \overline{\lambda_k\big(H_\alpha\big)} =
    \lambda_k\big(H_\alpha\big).  
  \end{equation}
  From this symmetry, we conclude that
  $d \Lambda(0) = 0$, i.e.\ $\alpha=0$ is a critical point.

  We now analyze the Hessian $\Hess \Lambda(0)$.  By \Cref{thm:main},
  equation~\eqref{eq:main_formula_nullity}, the cycle intersection
  form \(\mathcal X_r=\mathcal R|_Z\) is non-degenerate under the
  hypotheses of the theorem. Hence \Cref{lem:Hodge_flat} applies to
  \(\mathcal R\). Expanding $\alpha_i = \gamma_i^\flat + d\theta_i$
  according to the Hodge decomposition in \eqref{eq:Hodge_flat}, we
  note that
  \begin{equation}
    \label{eq:gauge_tt}
    \Lambda(t_1\alpha_1+t_2\alpha_2)
    = \Lambda(t_1\gamma_1^\flat+t_2\gamma_2^\flat + t_1d\theta_1 +
    t_2d\theta_2)
    = \Lambda(t_1\gamma_1^\flat+t_2\gamma_2^\flat),
  \end{equation}
  by gauge invariance.

  We now use the following formula for the second derivative of the
  simple eigenvalue $\Lambda(t_1,t_2)$ of a smooth hermitian matrix
  $H(t_1,t_2)$ (with the normalized eigenvector $\psi(t_1,t_2)$),
  \begin{equation}
    \label{eq:Hessian_formula}
    \frac{\partial^2 \Lambda}{\partial t_1 \partial t_2} 
    = \left< \psi, \frac{\partial^2 H}{\partial t_1 \partial t_2}
      \psi\right>
    + 2 \Real \left<\frac{\partial (H-\Lambda)}{\partial t_1} \psi,
      \frac{\partial\psi}{\partial t_2} \right>.
  \end{equation}
  Formula~\eqref{eq:Hessian_formula} can be obtained by
  differentiating the identity
  \begin{equation}
    \label{eq:QForm}
    0 = \left<\psi(t_1,t_2),\, \big(H(t_1,t_2)
      - \Lambda(t_1,t_2)\big) \psi(t_1,t_2) \right>,
  \end{equation}
  where \(\|\psi(t_1,t_2)\|=1\),
  with respect to $t_1$ and using
  $\big(H(t_1,t_2) - \Lambda(t_1,t_2)\big) \psi(t_1,t_2) = 0$ to
  obtain
  \begin{equation}
    \label{eq:HF}
    0 = \left<\psi(t_1,t_2),\, \frac{\partial}{\partial t_1}\Big(H(t_1,t_2)
      - \Lambda(t_1,t_2)\Big) \psi(t_1,t_2) \right>,
  \end{equation}
  and then differentiating again with respect to $t_2$.
  
  Since we already have
  $\frac{\partial \Lambda}{\partial t_1}(0,0) = 0$ from $d\Lambda=0$,
  in the second term of \eqref{eq:Hessian_formula} we evaluate
  \begin{equation}
    \label{eq:first_deriv}
    \frac{\partial}{\partial t_1}\left(H_{t_1 \gamma_1^\flat +
      t_2\gamma_2^\flat} \psi\right) \Big|_{t_1=t_2=0}
    = \left.\frac{d}{dt_1}
      \big(H_{t_1\gamma_1^\flat} \psi)\right|_{t_1=0} = 0 \in \C^V,
    \qquad \text{where } \psi=\psi(0,0),
  \end{equation}
  by \Cref{lem:gauge_useful}.  Therefore, we only need to compute the
  first term in \eqref{eq:Hessian_formula}, which by \Cref{lem:HessQF}
  evaluates to
  \begin{equation}
    \label{eq:Hessian_2}
    \frac12 \Hess \Lambda(0)
    = \mathcal{R}^\sharp \left(\gamma_1^\flat, \gamma_2^\flat\right)
      = \mathcal{R} \left(\gamma_1, \gamma_2 \right)
      = \mathcal{X}_r(\gamma_1, \gamma_2),
  \end{equation}
  where the last equality is true because $\mathcal{X}_r$ is the
  restriction $\mathcal{R}$ to cycles.

  We now observe that if
  $C^1(G,\R) \ni \alpha = \gamma^\flat + d\theta$ (where the flat is
  computed with respect to $\mathcal{R}$), then
  $[\alpha]^\sharp = \gamma \in H_1(G,\R)$ (where the sharp is
  computed with respect to the restricted form $\mathcal{X}_r$).
  Indeed,
  \begin{equation}
    \label{eq:def_sharp_check}
    \mathcal{X}_r([\alpha]^\sharp,g)
    = [\alpha] \cdot g
    = (\gamma^\flat + d\theta) \cdot g
    = \gamma^\flat \cdot g
    = \mathcal{X}_r(\gamma, g)
    \qquad \text{for all } g \in H_1,
  \end{equation}
  where we used the definition of sharp, \eqref{eq:sharp_def}, the
  definition of the pairing between $H^1$ and $H_1$, the fact that $d\theta
  \cdot g = 0$ for any $\theta \in \R^V$ and $g\in H_1$, and the
  definition of flat, \eqref{eq:flat_def}.  Since the form
  $\mathcal{X}_r$ is non-degenerate, comparing the start and
  end of \eqref{eq:def_sharp_check} we get $[\alpha]^\sharp = \gamma$.

  We now continue \eqref{eq:Hessian_2} with
  \begin{equation}
    \label{eq:Hessian_3}
     \mathcal{X}_r(\gamma_1, \gamma_2)
      = \mathcal{X}_r([\alpha_1]^\sharp, [\alpha_2]^\sharp)
      = \mathcal{X}_r^\sharp([\alpha_1], [\alpha_2]),    
  \end{equation}
  establishing formula~\eqref{eq:Hess}. 
  
  The coordinate representation follows from the observation that
  $\gamma$ in the decomposition $\alpha = \gamma^\flat + d\theta$ is
  given by the formula
  \begin{equation}
    \label{eq:alpha_to_gamma}
    \gamma = C\left(C^\top R C\right)^{-1} C^\top \alpha.
  \end{equation}
  Indeed, \(C^\top R C\) is invertible as a matrix representation of a
  non-degenerate form $\mathcal{X}_r$; the right-hand side of
  \eqref{eq:alpha_to_gamma}, being of the form $C\vec{x}$, is a cycle;
  and \(\gamma^\flat=R\gamma\) satisfies
  \(C^\top(\alpha-\gamma^\flat)=0\), which is the condition that
  \(\alpha-\gamma^\flat\) vanish on all cycles.  Using
  \eqref{eq:Hessian_2}, we get
  \begin{align*}
    \label{eq:Hessian_4}
    \frac12 \Hess \Lambda(0)
    = \mathcal{R} \left(\gamma_1, \gamma_2 \right)
    &= \left< C\left(C^\top R C\right)^{-1} C^\top \alpha_1,\,
    R C\left(C^\top R C\right)^{-1} C^\top \alpha_2 \right> \\
    &= \left<C^\top\alpha_1, \left(C^\top R C\right)^{-1} C^\top
      \alpha_2\right>,
  \end{align*}
  as claimed in equation \eqref{eq:Hess_coord}.
\end{proof}

\section{Application: dispersion relation of strained graphene}
\label{sec:graphene}
In this section we apply \Cref{thm:magHessian} to study the band
functions of the tight-binding model of strained graphene and its
variations, by computing the Hessian at the so-called corner or
symmetry critical points in the quasimomentum space.  The presentation
in this section follows ``physics style'', omitting proofs and using
our results somewhat beyond the context in which they were
established.

\subsection{The tight-binding model}

The tight-binding model for strained graphene
\cite{PerCasPer_prb09} is a graph whose vertex set $V$ is the union of
two lattices in $\R^2$, the $A$ lattice generated by arbitrary
linearly independent vectors $\vec{a}_1$ and $\vec{a}_2$, and the $B$
lattice obtained from $A$ by the shift by
$\vec\tau = (\vec{a}_1+\vec{a}_2)/3$, i.e.
\begin{equation}
  \label{eq:AB_def}
  A := \{n_1 \vec{a}_1 + n_2 \vec{a}_2 : n_1,n_2\in\Z\}
  \qquad
  B := \{n_1 \vec{a}_1 + n_2 \vec{a}_2 +\vec\tau : n_1,n_2\in\Z\}.
\end{equation}
The adjacency is defined by introducing an edge from each point
$\vec{x}$ in the $A$ lattice to points $\vec{x}+\vec\tau$,
$\vec{x}+\vec\tau-\vec{a}_1$ and $\vec{x}+\vec\tau-\vec{a}_2$ in the $B$
lattice, see \Cref{fig:graphene}.

\begin{figure}
\begin{tikzpicture}[scale=1.7]
    \coordinate (a1) at ({1}, 1/2);
    \coordinate (a2) at ({2/3}, -1/2);
    \coordinate (tauB) at ($ {1/3}*(a1) + {1/3}*(a2) $);

    \foreach \nOne in {2,1,0,-1} {
      \foreach \nTwo in {2,1,0,-1,-2} {

        \coordinate (RA) at ($\nOne*(a1) + \nTwo*(a2)$);
        \coordinate (RB) at ($(RA) + (tauB)$);

        \pgfmathtruncatemacro{\Sum}{\nOne + \nTwo}
        \ifthenelse{\Sum<3 \and \Sum > -3}{
            \draw[gray, very thick] (RA) -- (RB); 
          }{}
        \ifthenelse{\Sum < 4 \and \Sum > -3}{
          \fill[red] (RA) circle (0.08) node[below=1pt] {}; 
          \ifthenelse{\nOne > -1}{
            \draw[green, thick] (RA) -- ($(RA) - (a1) + (tauB)$); 
          }{}
          \ifthenelse{\nTwo > -2}{
            \draw[black, thick] (RA) -- ($(RA) - (a2) + (tauB)$); 
          }{}
          \fill[red] (RA) circle (0.08) node[below=1pt] {}; 
        }{}
        \ifthenelse{\Sum<3 \and \Sum > -4}{
          \fill[blue] (RB) circle (0.08) node[above=1pt] {}; 
        }{}
      }
    }
    \draw[->, dotted, thick, black] (0,0) -- (a1) node[midway, above] {$\vec{a}_1$};
    \draw[->, dotted, thick, black] (0,0) -- (a2) node[midway, below] {$\vec{a}_2$};
    \node[left, red] at (0,0) {$A$};
    \node[right, blue] at (tauB) {$B$};

    \path ( $(a1)+(a2)$ ) -- ( $(a1)+(a2)+(tauB)$ ) node[midway,below] {$t_3$};
    \path ( $(a2)+(tauB)$ ) -- ( $(a1)+(a2)$ ) node[midway,left] {$t_1$};
    \path ( $(a1)+(a2)$ ) -- ( $(a1)+(tauB)$ ) node[midway,right] {$t_2$};
  \end{tikzpicture}
  \begin{tikzpicture}[
    arr/.style={
      decoration={
        markings,
        mark=at position 0.65 with {\arrow{Stealth}}
      },
      postaction={decorate}
    },
    arr_rev/.style={
      decoration={
        markings,
        mark=at position 0.35 with {\arrowreversed{Stealth}}
      },
      postaction={decorate}
    }
    ]

    \coordinate (b) at (-2,0);
    \coordinate (mb) at (2,0);

    \draw[arr, bend left=50, draw=green,thick]
    (b) to node[draw=none, fill=white, above, midway] {$t_1$} (mb);
    \draw[arr, bend right=50,draw=black,thick]
    (b) to node[draw=none, fill=white, below, midway] {$t_2$} (mb);
    \draw[arr_rev,draw=gray,very thick]
    (b) to node[draw=none, fill=white, above, midway, yshift=0pt] {$t_3$} (mb);

    \fill[red] (b) circle (0.16) node[above,black,yshift=2pt] {$a$};
    \fill[blue] (mb) circle (0.16) node[above,black,yshift=2pt] {$b$};

  \end{tikzpicture}

  \caption{(Left) A finite piece of the strained graphene graph defined
    by the vectors $\vec a_1=(1,1/2)$ and $\vec a_2=(2/3, -1/2)$. (Right) The
    quotient of the infinite periodic graph by the period lattice
    $\Z^2$.}
  \label{fig:graphene}
\end{figure}
\begin{figure}
  \centering
  \begin{tikzpicture}[
    scale=1.7,
    Aatom/.style={red},
    Batom/.style={blue},
    Datom/.style={orange},
    tthree/.style={gray, very thick},
    tone/.style={green!60!black, thick},
    ttwo/.style={black, thick},
    tdec/.style={orange, thick}
]
\coordinate (a1) at (1, 1/2);
\coordinate (a2) at (2/3, -1/2);
\coordinate (tauB) at ($1/3*(a1)+1/3*(a2)$);
\coordinate (dvec) at (0,0.5);
\foreach \nOne in {-1,0,1} {
  \foreach \nTwo in {-1,0,1} {
    \pgfmathtruncatemacro{\Sum}{\nOne+\nTwo}
    \ifnum\Sum<2
      \ifnum\Sum>-2
        \coordinate (A0) at ($\nOne*(a1)+\nTwo*(a2)$);
        \coordinate (A1) at ($(A0)+(a1)$);
        \coordinate (A2) at ($(A0)+(a2)$);
        \coordinate (B0)  at ($(A0)+(tauB)$);
        \coordinate (D)  at ($(B0)+(dvec)$);
        \draw[tthree] (A0) -- (B0);
        \draw[tone]   (A1) -- (B0);
        \draw[ttwo]   (A2) -- (B0);
        \draw[tdec] (B0) -- (D);
      \fi
      \fi
  }
}
\draw[tone] ( $(a2)-(a1)$ ) -- ( $(a2)-2*(a1)+(tauB)$ );
\draw[ttwo] ( $(a1)-(a2)$ ) -- ( $(a1)-2*(a2)+(tauB)$ );
\draw[tthree] ( $(a2)+(a1)$ ) -- ( $(a2)+(a1)+(tauB)$ );
\foreach \nOne in {-1,0,1} {
  \foreach \nTwo in {-1,0,1} {
    \pgfmathtruncatemacro{\Sum}{\nOne+\nTwo}
    \coordinate (A0) at ($\nOne*(a1)+\nTwo*(a2)$);
    \coordinate (B)  at ($(A0)+(tauB)$);
    \coordinate (D)  at ($(B)+(dvec)$);

    \ifnum\Sum<3\ifnum\Sum>-1
    \fill[Aatom] (A0) circle (0.055);
    \fi\fi
    \ifnum\Sum>-2\ifnum\Sum<2
    \fill[Batom] (B) circle (0.055);
    \fi\fi
    \ifnum\nOne<1\ifnum\Sum>-1
    \fill[Datom] (D) circle (0.055);
    \fi\fi
  }
}
\node[left, red] at (0,0) {$A$};
\node[right, blue] at (tauB) {$B$};
\node[above right, orange] at ($(tauB)+(dvec)$) {$D$};
\path (0,0) -- (tauB)
  node[midway, below,yshift=1pt] {$t_3$};
\path (0,0) -- ($(tauB)-(a1)$)
  node[midway, right] {$t_1$};
\path (0,0) -- ($(tauB)-(a2)$)
  node[midway, right,xshift=-1pt] {$t_2$};
\path (tauB) -- ($(tauB)+(dvec)$)
  node[midway, left,xshift=1pt] {$t_d$};
\end{tikzpicture}
\begin{tikzpicture}[
    arr/.style={
      decoration={
        markings,
        mark=at position 0.65 with {\arrow{Stealth}}
      },
      postaction={decorate}
    },
    arr_rev/.style={
      decoration={
        markings,
        mark=at position 0.35 with {\arrowreversed{Stealth}}
      },
      postaction={decorate}
    }
    ]

    \coordinate (b) at (-2,0);
    \coordinate (mb) at (2,0);
    \coordinate (d) at (2,2);
    \draw[arr, bend left=50, draw=green,thick]
    (b) to node[draw=none, fill=white, above, midway] {$t_1$} (mb);
    \draw[arr, bend right=50,draw=black,thick]
    (b) to node[draw=none, fill=white, below, midway] {$t_2$} (mb);
    \draw[arr_rev,draw=gray,very thick]
    (b) to node[draw=none, fill=white, above, midway, yshift=0pt] {$t_3$} (mb);
    \draw[draw=orange,very thick]
    (mb) to node[draw=none,right, midway, yshift=0pt] {$t_d$} (d);

    \fill[red] (b) circle (0.16) node[above,black,yshift=2pt] {$a$};
    \fill[blue] (mb) circle (0.16) node[below,black,yshift=-2pt] {$b$};
    \fill[orange] (d) circle (0.16) node[above,black,yshift=2pt] {$d$};

  \end{tikzpicture}
  \caption{Strained decorated graphene and its quotient graph.}
  \label{fig:decorated_graphene}
\end{figure}

The graphene Hamiltonian $H$ is the self-adjoint operator on
$\ell^2(V)$ acting as
\begin{align}
  \label{eq:GrapheneHamA}
  (H \psi)_{a,n_1,n_2} &= a \psi_{a,n_1,n_2} + t_1
  \psi_{b,n_1-1,n_2} + t_2 \psi_{b,n_1,n_2-1}
  + t_3 \psi_{b,n_1,n_2} \\
  \label{eq:GrapheneHamB}
  (H \psi)_{b,n_1,n_2} &= b \psi_{b,n_1,n_2} + t_1
  \psi_{a,n_1+1,n_2} + t_2 \psi_{a,n_1,n_2+1}
  + t_3 \psi_{a,n_1,n_2},
\end{align}
where $a$, $b$, $t_1$, $t_2$ and $t_3$ are real constants and
$n_1,n_2\in \Z$ are arbitrary.  Note that we allow different
\emph{potentials} $a$ and $b$ at sites $A$ and $B$.  Physically, it
means that $B$ lattice may be composed of atoms different from the $A$
lattice; since graphene has identical atoms at every site, it
corresponds to the case $a=b$.

Because the Hamiltonian $H$ is periodic, it has the \emph{Bloch
  representation} as a direct integral of the $2\times2$ matrices,
cf.\ \Cref{sec:motivation_dispersion},
\begin{equation}
  \label{eq:H_Bloch}
  H(k_1, k_2) =
  \begin{pmatrix}
    a & t_1 e^{-ik_1} + t_2 e^{-ik_2} + t_3 \\
    t_1 e^{ik_1} + t_2 e^{ik_2} + t_3 & b
  \end{pmatrix}
\end{equation}
over the torus $(k_1,k_2)\in (\R/2\pi\Z)^2$.  Correspondingly, the
spectrum of $H$ is the union of the spectra of $H(k_1,k_2)$ over the
torus.  The eigenvalue functions \(\lambda_j=\lambda_j(k_1,k_2)\) are
known as the dispersion relations, or band functions; their union in
\((k_1,k_2,\lambda)\)-space is the real Bloch variety.  Relevant
questions include understanding the singularities and the extrema of
$\lambda_j(k_1,k_2)$, see \cite{HarKucSob_jpa07}, and whether
$\lambda_j$ is a perfect Morse function \cite{FauSot_prep25}.

We can also add periodic decorations to the graph, for example as
shown in \Cref{fig:decorated_graphene}.  The Bloch representation for
this graph becomes
\begin{equation}
  \label{eq:H_Bloch_deco}
  H(k_1, k_2) =
  \begin{pmatrix}
    a & t_1 e^{-ik_1} + t_2 e^{-ik_2} + t_3 & 0\\
    t_1 e^{ik_1} + t_2 e^{ik_2} + t_3 & b & t_d \\
    0 & t_d & d
  \end{pmatrix},
\end{equation}
which will be treated on the equal footing with \eqref{eq:H_Bloch}.

\subsection{The Hessian at the corner points}

We start by observing that the matrix $H(k_1,k_2)$ may be viewed as a
magnetic perturbation of a real Hamiltonian supported on the
\emph{quotient} graph shown in \Cref{fig:graphene}(right) or
\Cref{fig:decorated_graphene}(right), respectively.  We aim to
compute the Hessian of the eigenvalues of $H(k_1, k_2)$ at the
so-called \emph{corner points} $(0,0)$, $(0,\pi)$, $(\pi,0)$ and
$(\pi,\pi)$, which are critical due to symmetries analogous to
\eqref{eq:real_symm}.  The points other than $(0,0)$ will be handled
by changing the sign of the corresponding $t$ parameters; for example
$H(k_1,\pi+k_2)$ has the same form as \eqref{eq:H_Bloch} or
\eqref{eq:H_Bloch_deco} except for minus signs in front of $t_2$.

In the natural ordering of the edges (and the orientation
in \Cref{fig:graphene}), we take
\begin{equation}
  \label{eq:Cgraphene}
  C =
  \begin{pmatrix}
    1 & 0\\
    0 & 1\\
    1 & 1
  \end{pmatrix},
  \qquad
  \alpha = (k_1, k_2, 0),
\end{equation}
for the graph in \Cref{fig:graphene}(right). For the decorated graph,
the corresponding cycle frame is obtained by adding a zero row to
\(C\), corresponding to the decoration edge.  According to
\Cref{thm:magHessian} and \Cref{rem:fluxHessian}, the Hessian in the
flux coordinates $(k_1,k_2) = C^\top \alpha$ is the inverse of the
matrix
\begin{equation}
  \label{eq:Xmatrix}
  X := C^\top R C = -
  \begin{pmatrix}
    \dfrac{1}{\psi_a t_1 \psi_b} + \dfrac{1}{\psi_a t_3 \psi_b} &
    \dfrac{1}{\psi_a t_3 \psi_b} \\[10pt]
    \dfrac{1}{\psi_a t_3 \psi_b} &
    \dfrac{1}{\psi_a t_2 \psi_b} + \dfrac{1}{\psi_a t_3 \psi_b}
  \end{pmatrix},
\end{equation}
where $\psi_a$ and $\psi_b$ are the entries of the eigenvector at the
critical point.  Here we assume that the critical point eigenvalue is
simple and formally apply the formula for the Hessian to the quotient
graphs which are not simple, due to the parallel edges between \(a\)
and \(b\).

At a nondegenerate corner critical point, the nature of the critical
point is determined by the sign of $\det X$:
\begin{align*}
  \det X > 0 &\quad\Longleftrightarrow\quad \text{minimum or maximum},\\
  \det X < 0 &\quad\Longleftrightarrow\quad \text{saddle point}.
\end{align*}
A direct computation gives
\begin{equation}
  \label{eq:detX}
  \det X
  =
  \left(\frac{1}{\psi_a \psi_b}\right)^2
  \frac{t_1 + t_2 + t_3}{t_1 t_2 t_3}.
\end{equation}
Thus the sign is independent of the eigenvector entries, provided
\(\psi_a\psi_b\neq 0\).
Normalizing $t_3=1$, we want to understand the sign of $\det X$ at
four points $(t_1,t_2) = (\pm \tau_1, \pm \tau_2)$, where
$\tau_1,\tau_2> 0$.

The dependence of the sign of $\det X$ on $(t_1,t_2)$ is indicated by
shading in \Cref{fig:corner_types}.  Different values of $\tau_1$ and
$\tau_2$ lead to different counts of saddle points among the corner
critical points.  There are four possible configurations, shown in
\Cref{fig:corner_types}.  In one of the configurations,
\Cref{fig:corner_types}(left), three of the corner points are saddles
and only one is a min or a max.  In particular, in this case the band
functions can not be Morse perfect (cf. \cite{FauSot_prep25}).

\begin{figure}
  \centering
  \begin{tikzpicture}[>=Stealth]
    \fill[pink] (0,0) -- (3,0) -- (3,2.5) -- (0,2.5) -- cycle;
    \fill[pink] (-3,0) -- (-1,0) -- (-3,2) -- cycle;
    \fill[pink] (0,0) -- (-1,0) -- (0,-1) -- cycle;
    \fill[pink] (0,-1) -- (1.5,-2.5) -- (0,-2.5) -- cycle;
    \draw[thick,->] (-3,0) -- (3,0) node[right] {$t_1$};
    \draw[thick,->] (0,-2.5) -- (0,2.5) node[above] {$t_2$};
    \draw[thick] (-3,2) -- (1.5,-2.5) node[pos=0.9,right,xshift=2pt] {$t_1+t_2=-1$};;

    \def\px{1.3}
    \def\py{1}
    \draw[blue, dotted] (-\px,\py) -- (\px,\py) -- (\px,-\py) -- (-\px,-\py) -- cycle;

    \fill[blue]  (-\px, \py)  circle (2.5pt);
    \fill[red]   ( \px, \py)  circle (2.5pt);
    \fill[blue]  (-\px,-\py)  circle (2.5pt);
    \fill[blue]  ( \px,-\py)  circle (2.5pt);

    \node[red,above]  at (-\px, \py)  {$SP$};
    \node[red,above] at ( \px, \py)  {$M$};
    \node[red,left]  at (-\px,-\py)  {$SP$};
    \node[red,right] at ( \px,-\py)  {$SP$};
  \end{tikzpicture}
  \qquad
  \begin{tikzpicture}[>=Stealth]
    \fill[pink] (0,0) -- (3.2,0) -- (3.2,2.5) -- (0,2.5) -- cycle;
    \fill[pink] (-3.2,0) -- (-1,0) -- (-3.2,2.2) -- cycle;
    \fill[pink] (0,0) -- (-1,0) -- (0,-1) -- cycle;
    \fill[pink] (0,-1) -- (1.5,-2.5) -- (0,-2.5) -- cycle;

    \draw[thick,->] (-3.2,0) -- (3.2,0) node[right] {$t_1$};
    \draw[thick,->] (0,-2.5) -- (0,2.5) node[above] {$t_2$};
    \draw[thick] (-3.2,2.2) -- (1.5,-2.5);
    \def\px{0.7}
    \def\py{0.7}

    \draw[blue,dotted] (-\px,-3*\py) rectangle (\px,3*\py);
    \draw[blue,dotted] (-3.5*\px,-\py) rectangle (3.5*\px,\py);
    \draw[blue,dotted] (-\px/2,-\py/2) rectangle (\px/2,\py/2);

    \fill[blue]  (-\px, 3*\py) circle (2.5pt);
    \fill[red]  ( \px, 3*\py) circle (2.5pt);
    \fill[blue] (-\px,-3*\py) circle (2.5pt);
    \fill[red]  ( \px,-3*\py) circle (2.5pt);

    \fill[red]  (-3.5*\px,  \py) circle (2.5pt);
    \fill[red]  ( 3.5*\px,  \py) circle (2.5pt);
    \fill[blue] (-3.5*\px, -\py) circle (2.5pt);
    \fill[blue] ( 3.5*\px, -\py) circle (2.5pt);

    \fill[blue] (-\px/2,  \py/2) circle (2.5pt);
    \fill[red]  ( \px/2,  \py/2) circle (2.5pt);
    \fill[red] (-\px/2, -\py/2) circle (2.5pt);
    \fill[blue]  ( \px/2, -\py/2) circle (2.5pt);
  \end{tikzpicture}

  \caption{The sign of $\det X$ (shaded areas correspond to $\det
    X>0$) and the possible types for the quadruples of critical
    points. (Left) Three saddle points and one min/max. (Right) Two
    saddle points and 2 min/max points, which can happen in 3 possible
    ways.  The saddle points are indicated by blue circles and the
    min/max points by red circles.}
  \label{fig:corner_types}
\end{figure}

It is easy to conclude that the configuration in
\Cref{fig:corner_types}(left) is achieved if and only if
\begin{equation}
  \label{eq:triangle_ineq}
  \tau_1+\tau_2 > 1,
  \qquad
  1+\tau_1 > \tau_2,
  \qquad
  1+\tau_2 > \tau_1,
\end{equation}
or, in other words, the values $\{\tau_1,\tau_2,1\}$ satisfy the
triangle inequalities.  Breaking one of the conditions in
\eqref{eq:triangle_ineq} corresponds to one of the three
configurations in \Cref{fig:corner_types}(right).

\subsection{Interpretation: interior extrema and Dirac points}

Since an eigenvalue is continuous, it must achieve its other extreme
at a location other than a corner critical point.  In the $a=b$ case
of the undecorated graphene, the triangle inequalities are precisely
the ``gapless'' condition of Pereira, Castro Neto, and
Peres~\cite{PerCasPer_prb09}; it guarantees existence of Dirac points
(conical singularities) where the ``missing'' extremum is achieved
(see also \cite{BanBerWey_jmp15}).  Furthermore, the results of
\cite{AloBerGor_prep25} show that a \emph{smooth} critical point
$(k_1,k_2)$ outside the corner set can exist if and only if the same
condition holds: $\{t_1,t_2,t_3\}$ satisfy the triangle inequalities
(and then, the corresponding eigenvector of $H(k_1,k_2)$ vanishes
either at $a$ or at $b$).

To give an informal summary of the above discussion, the triangle
inequalities \eqref{eq:triangle_ineq} describe the setting in which an
extremum of $\lambda_j$ is achieved in the interior of the Brillouin
zone, either at a smooth critical point or at a conical point.
Surprisingly, this conclusion is entirely independent of the values of
the onsite potentials \(a,b\), and also independent of the presence of
the decoration and of its parameters.

\section{Application: Stability of oscillator networks}
\label{sec:oscillator_networks}

In this section, we apply \Cref{thm:main} to analyze stability of
stationary solutions to a coupled nonlinear systems of differential
equations.

There are many applications where it is useful to know where in the
spectrum some explicitly known eigenvector occurs. One example occurs
in the study of stability of exact solutions to nonlinear differential
equations. If, as is often the case, the equation admits some
symmetry, then the associated linearized operator will have a kernel,
with eigenvectors given by the action of the generators of the
symmetry on the solution. Let us assume that the kernel of the
linearized operator is one-dimensional, and that the operator itself
is Hermitian, as would be the case for a gradient flow. In this case
the position of $0$ in the spectrum indicates the dimension of the stable
and unstable manifolds. If the eigenvalues are given by $\lambda_1\leq
\lambda_2 \leq \ldots \lambda_{k-1} < 0 < \lambda_{k+1}\leq \ldots
\lambda_n$ then the linearized operator has inertia
$(n_-,n_0,n_+)=(k-1,1,n-k)$. In particular, the dimension of the
unstable manifold is $n-k$ and linearized stability is
only possible when $k=n$.
One well-studied example of nonlinear dynamics on a graph are the
so-called swing equations \cite{DorBul_siamjam12}
\begin{equation}
M_i\frac{d^2\theta_i}{dt^2} + \gamma_i \frac{d\theta_i}{dt} = \omega_i + \sum_{j:(ij)\in E} a_{ij} \sin(\theta_j-\theta_i-\phi_{ij}). \label{eqn:Swing}
\end{equation}
The swing equations model the synchronization of a network of
electrical generators, such as the Western interconnect, the
electrical grid covering most of the western United States and Canada.
The coefficients $M_i$ and $\gamma_i$ represent the inertial and
internal damping of the $i^{\text{\rm{th}}}$ generator in the network,
and $a_{ij}$ (assumed symmetric) represents the coupling strength
between generators $i$ and $j$. The nonzero $a_{ij}$ define a graph,
which typically has few cycles as compared with the number of
vertices/generators. The full swing equations are not a gradient flow,
but in many applications and in many analyses, the phase
shifts and inertial effects are neglected. This is equivalent to assuming that
$\phi_{ij}=0$ and $M_i/\gamma_i \ll 1$ respectively in equation
\eqref{eqn:Swing}. Under these assumptions it suffices to consider the
closely related non-uniform Kuramoto model \cite{Dor_Bul_IEEE}
\begin{equation}
  \label{eq:NUKM}
\gamma_i \frac{d\theta_i}{dt} = \omega_i + \sum_{j:(ij)\in E} a_{ij}\sin(\theta_j-\theta_i).  
\end{equation}
In the case where all of the generators have the
same internal damping $\gamma_i=1$, the linearization about a fixed point
($\frac{d\theta_i}{dt}=0$) denoted $\boldsymbol{\theta}^*$ is the
following Jacobian matrix supported on the graph
\[
  L_{ij} =
  \begin{cases} a_{ij} \cos(\theta^*_j-\theta^*_i), & i\neq j,\\
    -\sum_{k:(ik)\in E} a_{ik} \cos(\theta^*_k-\theta^*_i), & i=j.
  \end{cases}
\]
The number of positive eigenvalues of the Jacobian $L$ is the
dimension of the unstable manifold of the fixed point
$\boldsymbol{\theta}^*$ and hence determines the stability of
$\boldsymbol{\theta}^*$.  The matrix $L$ is of the form of the
combinatorial graph Laplacian with the important distinction that
classically the graph Laplacian is assumed to have all edge weights
positive: here the edge weight $a_{ij}\cos(\theta^*_j-\theta^*_i)$ can
take either sign, but will be assumed non-zero. For some discussion of
other contexts in which such $L$ arise see \cite{BroDev_siamjam14}.
The Kuramoto model is translation invariant,
$\theta_i \mapsto \theta_i+\alpha$, implying that the vector
$v = \mathbf{1} := (1,1,1,\ldots,1)^\top$ lies in the kernel of
$L$. This should also be clear from the definition of $L$, as the row
sum is zero.  In this context, the following result was derived to
count the dimension of the unstable manifold:
\begin{theorem}[Bronski, DeVille, Ferguson
  \cite{BroDevFer_siamjam16}]
  \label{thm:BDF}
  Assume the kernel of $L$ is spanned by the vector $\mathbf{1}$.  Let
  $D$ be a $|\E|\times|\E|$ diagonal matrix with entries
  $D_{i,j} = L_{i,j}$ and $C$ be a frame for the cycle space $Z$
  of the graph $G$.  Then
  \begin{equation}
    \label{eq:BDF}
    n_+(L) = n_-(D) - n_+\left(-C^\top D^{-1} C\right).
  \end{equation}
\end{theorem}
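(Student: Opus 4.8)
The plan is to read \Cref{thm:BDF} as the specialization of \Cref{thm:main} to $H = L$, $\psi = \mathbf{1}$, and $\lambda_k = 0$, and then to convert the resulting inertia identity into the form \eqref{eq:BDF} by elementary eigenvalue counting. First I would check the hypotheses of \Cref{thm:main}: the matrix $L$ is real symmetric and strictly supported on $G$ (each off-diagonal entry $a_{rs}\cos(\theta^*_s-\theta^*_r)$ is nonzero precisely on the edges of $G$), the eigenvector $\mathbf{1}$ has no zero entries, and the eigenvalue $0$ is simple since $\ker L = \mathrm{span}(\mathbf{1})$. Because $0 = \lambda_k$ lies just above the $n_-(L)$ negative eigenvalues of $L$, the index $k$ of \Cref{thm:main} is pinned down by $k-1 = n_-(L)$.

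Second, I would relate the two diagonal matrices. Setting $\psi = \mathbf{1}$ in \eqref{eq:Phi_def} gives $\Phi_{(r\to s)} = -L_{r,s} = -D_{(r\to s)}$, so $\Phi = -D$ and $\Phi^{-1} = -D^{-1}$. Substituting into the coordinate form \eqref{eq:main_in_coordinates} of the main theorem and using $n_-(-X) = n_+(X)$ throughout, the identity $n_-(\Phi) = (k-1) + n_-(C^\top\Phi^{-1}C)$ becomes
\[
  n_+(D) = n_-(L) + n_+\!\left(C^\top D^{-1} C\right).
\]

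Finally I would pass to \eqref{eq:BDF} by counting eigenvalues of each sign. I would record the signature relations $n_-(L) + n_+(L) = n-1$ (valid since $n_0(L) = 1$), $n_-(D) + n_+(D) = |E|$ (the diagonal matrix $D$ is invertible), and $n_-(C^\top D^{-1}C) + n_+(C^\top D^{-1}C) = \beta$, where the nonsingularity of $C^\top D^{-1}C$ is inherited from $n_0(C^\top\Phi^{-1}C) = 0$ established in the first proof of \Cref{thm:main}. Eliminating $n_+(D)$, $n_+(C^\top D^{-1}C)$, and $n_-(L)$ from the displayed identity using these relations together with $\beta = |E|-n+1$, the $|E|$ and $(n-1)$ terms cancel and one arrives at $n_+(L) = n_-(D) - n_-(C^\top D^{-1}C)$; rewriting $n_-(C^\top D^{-1}C) = n_+(-C^\top D^{-1}C)$ produces exactly \eqref{eq:BDF}. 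I anticipate no substantive obstacle: the whole argument is a specialization of \Cref{thm:main} followed by bookkeeping, so the only care needed is in tracking the inertia conventions and confirming that $C^\top D^{-1}C$ is nondegenerate.
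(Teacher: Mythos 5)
Your proof is correct, and it takes essentially the same route as the paper: both deduce \Cref{thm:BDF} as a specialization of \Cref{thm:main} with $\psi = \mathbf{1}$ and $\lambda_k = 0$. The only difference is the sign convention --- the paper sets $H = -L$, so that \eqref{eq:Phi_def} gives $\Phi = D$ exactly and $k-1 = n_-(H-\lambda_k) = n_+(L)$, making \eqref{eq:BDF} an immediate rearrangement of \eqref{eq:main_formula}; your choice $H = L$ gives $\Phi = -D$ and $k-1 = n_-(L)$, which forces the extra round of inertia bookkeeping (using $n_0(L)=1$, invertibility of $D$, nondegeneracy of $C^\top D^{-1} C$, and $\beta = |E|-n+1$) that you carry out correctly but that the paper's sign choice avoids entirely.
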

In the context of the present work, \Cref{thm:BDF} can be obtained
from \Cref{thm:main} applied to \(H=-L\), by setting \(\lambda=0\) and
\(\psi=\mathbf 1\), and translating between the Morse index of
\(\mathcal X_r\) and the inertia of \(C^\top D^{-1}C\).

In the general situation, when the parameters $\gamma_i>0$ in equation
\eqref{eq:NUKM} are different, the Jacobian matrix is $\Gamma^{-1}L$,
where $\Gamma$ is the diagonal matrix with $\Gamma_{ii} = \gamma_i$.
Thus the Jacobian is not symmetric but symmetrizable, being similar to
the $n\times n$ symmetric matrix
$H := \Gamma^{-\frac12} L \Gamma^{-\frac12}$.  The kernel contains the
vector $\mathbf{v}$ with $v_i = \sqrt{\gamma_i}$ generated by the
translational invariance, but since it is not $\mathbf{1}$,
\Cref{thm:BDF} cannot be applied directly and we need the more general
\Cref{thm:main} to determine the stability of $\boldsymbol{\theta}^*$.
Assuming this kernel is one-dimensional, the fixed point is stable
modulo translations precisely when
\begin{equation}
  \label{eq:StabCrit}
  n = \nodcnt(\mathbf{v},H) + 1 -  n_-(\mathcal X_r)
  \qquad \text{with } H = \Gamma^{-\frac12} L \Gamma^{-\frac12}
  \text{ and }\mathbf{v} = \left(\sqrt{\gamma_i}\right)_{i=1}^n,
\end{equation}
where the weights in \(\mathcal X_r\) are
\[
  r_e
  =
  -(v_i H_{i,j} v_j)^{-1}
  =
  -L_{i,j}^{-1},
  \qquad e=(i,j)\in\vec E.
\]
The difference between the left-hand side and right-hand side of
\eqref{eq:StabCrit} is the dimension of the unstable manifold.
\begin{example}
  \label{ex:kuramoto}
  Consider the Kuramoto model on a graph depicted in
  \Cref{fig:KGraph}, with $n=7$ and $\beta=8-7+1=2$.  The equations of
  motion are taken to be
  \[
    \gamma_i \frac{d\theta_i}{dt} = \omega_i + \sum_{j: (ij)\in E} \sin(\theta_j-\theta_i) \qquad \boldsymbol{\omega}=\begin{pmatrix}~~~0.05\\ -0.09 \\ ~~0.14 \\~~ 0.15 \\ -0.11 \\ -0.10 \\ -0.04 \end{pmatrix}  \qquad \boldsymbol{\gamma} = \begin{pmatrix}
      1 \\ 0.5 \\ 0.5 \\ 2\\ 2\\ 2 \\ 2  
    \end{pmatrix}.
  \]
  \begin{figure}
    \centering
    \includegraphics[width=0.4\linewidth]{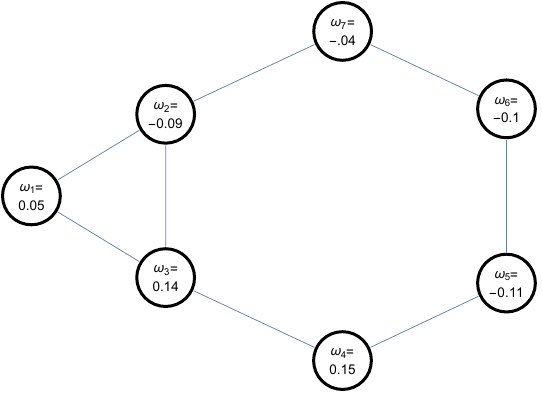}
    \caption{Kuramoto model on a graph with $\beta=2$}
    \label{fig:KGraph}
  \end{figure}

Such models will typically support multiple steady states whose
stability properties will vary. This particular example has
thirty-five fixed points, modulo translation, three of which are
depicted in figure \ref{fig:ClockGraphs}. We choose the translate so
that $\theta_1=0.$ The lines, or clock face, in each vertex indicate the angle of the
corresponding $\theta_i$, while the colors of the edges $(i,j)$
represent the sign of the corresponding entry of the symmetrized
stability matrix $H=\Gamma^{-\frac12} L \Gamma^{-\frac12},$ which is
given by the cosine of the angle difference, with positive entries
shown as black edges and negative entries in red.
\begin{figure}
    \centering
\includegraphics[width=0.3\linewidth]{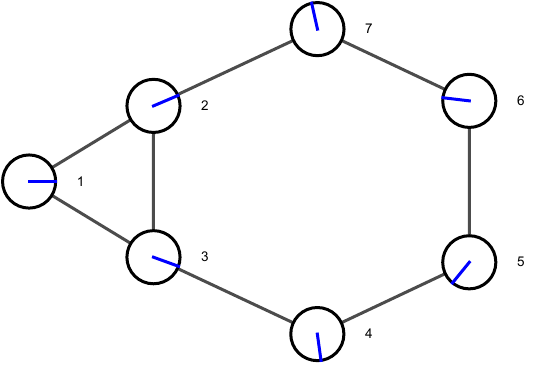}\includegraphics[width=0.3\linewidth]{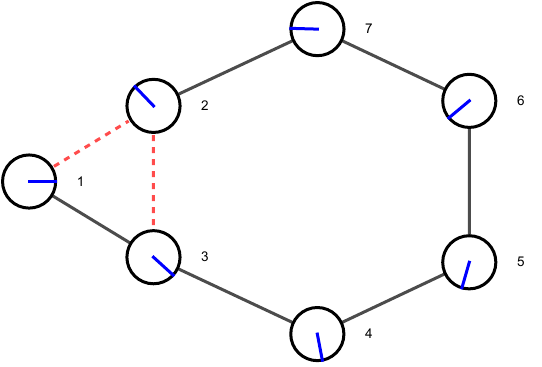}\includegraphics[width=0.3\linewidth]{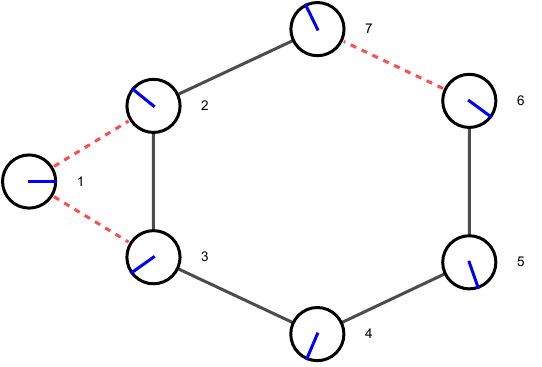}
    \caption{Three fixed points of the Kuramoto model defined on the
      graph in Figure \ref{fig:KGraph}. The ``clock face'' in each
      vertex represents the angle $\theta_i$. Red edges correspond to
      the negative entries of $H$, which occur when the angular
      difference between the vertices is greater than $\frac\pi{2}$.}
    \label{fig:ClockGraphs}
\end{figure}
We now check the stability criterion \eqref{eq:StabCrit} for the
linearized stability matrices
$H_{\boldsymbol{\theta}^*_{1,2,3}}:=\Gamma^{-\frac12}L_{\boldsymbol{\theta}_{1,2,3}^*}\Gamma^{-\frac12}$.
Since the eigenvector $\mathbf{v}$ has positive entries
$v_i = \sqrt{\gamma_i}$ we see that $\nodcnt(\mathbf{v},H)$ is simply
the number of edges $(i,j)$ such that $H_{i,j}>0$. This gives
$\nodcnt(\mathbf{v},H_{\boldsymbol{\theta}_1^*})=8$,
$\nodcnt(\mathbf{v},H_{\boldsymbol{\theta}_2^*})=6$ and
$\nodcnt(\mathbf{v},H_{\boldsymbol{\theta}_3^*})=5$.  For the last of
these it is worth noting that since $\nodcnt=5$, the maximum possible
value on the right-hand side of \eqref{eq:StabCrit} would be
$5+1=6<n$, so this solution is necessarily unstable, although the
exact dimension of the unstable manifold remains to be seen. In fact,
of the 35 steady states all but eight can be seen to be unstable by a
computation of the nodal count alone, without requiring the finer
information provided by the cycle intersection form.
A frame for $Z$ is
given by\footnote{This assumes a particular choice of orientation,
  which is hopefully clear.}
\[
  C^\top = \begin{pmatrix}
    1&1&1&0&0&0&0&0\\
    0&0&1&1&1&1&1&1
\end{pmatrix}
\]
and the cycle intersection form matrices are ($s=1,2,3$)
\begin{align*}
C^\top R_{\boldsymbol{\theta}_s^*} C = \begin{pmatrix}
    -3.52 & -1.37 \\
-1.37 & -15.39 \\
\end{pmatrix},\quad
\begin{pmatrix}
    1.10 & 1.00 \\
1.00 & -5.32 \\
\end{pmatrix}, \quad
\begin{pmatrix}
-1.15 & -3.66 \\
-3.66 & -7.38 \\
\end{pmatrix}.
\end{align*}
The first of these is negative definite and the latter two are
saddles. For the fixed point $\boldsymbol{\theta}_1^*$, we get
$\nodcnt(\mathbf{v},H) + 1 - n_-(\mathcal X_r) =8+1-2 = 7 = n$,
indicating that the fixed point is stable modulo the
symmetry-generated center manifold.  Similarly the zero eigenvalue for
the linearization about $\boldsymbol{\theta}_2^*$ satisfies
$\nodcnt(\mathbf{v},H) + 1 - n_-(\mathcal X_r) = 6+1-1=6$, implying a
$7-6=1$-dimensional unstable manifold.  Finally, the zero eigenvalue
of the linearization about the fixed point $\boldsymbol{\theta}_3^*$
satisfies $\nodcnt(\mathbf{v},H) + 1 - n_-(\mathcal X_r) =5+1-1=5$,
indicating a two-dimensional unstable manifold.
\end{example}

\section{Application: lower bound for the number of strong nodal domains}
\label{sec:nod_dom}

In this section we apply \Cref{thm:main} to provide a simple proof for
several existing lower bounds for the number of \emph{nodal domains}
of graph eigenvectors.
The study of nodal domains is fundamental in spectral theory. While
Courant's theorem provides an upper bound on both manifolds and
graphs\footnote{The list of references for the \emph{upper} bounds for
  nodal domains on graphs includes
  \cite{DavGlaLeySta_laa01,Fri_dmj93,FriLinSah_prep26,GlaZhu_qjmam02,Urs_laa18}.},
non-trivial lower bounds are only possible on graphs.

\begin{definition}
  \label{def:nod_dom}
  Let $H$ be strictly supported on $G$ with $H_{u,v}<0$ for $u\sim
  v$.  For an eigenvector $\psi$ of $H$, a \emph{strong nodal domain}
  is a maximal connected subgraph of $G$ induced by a set
  of vertices where $\psi$ has the same sign.  The \emph{strong nodal
    domain count}, $\nu(\psi)$, is the total number of such domains.
\end{definition}

Berkolaiko \cite{Ber_cmp08} established the first lower bound for the
nodal domain count on a general graph $G$,
\begin{equation}
  \label{eq:Bbound}
  \nu(\psi_k) \geq k - \beta,
\end{equation}
under the assumption that $\psi_k$ has no zero entries and the
corresponding eigenvalue $\lambda_k$ is simple.  H.~Xu and S.-T.~Yau
\cite{XuYau_jc12} extended his result by eliminating both of these
assumptions.  Their results were sharpened by Deidda, Putti and Tudisco
\cite{DeiPutTud_acha23} and, independently, by C.~Ge and S.~Liu
\cite{GeLiu_cvpde23}, assuming access to progressively more detailed
information about the eigenvector $\psi$.  The former paper proved it
in the more general setting of (nonlinear) $p$-Laplacians, while the
latter dropped the sign assumption on the off-diagonal entries of $H$.

Here we show how the Deidda--Putti--Tudisco bound follows almost
immediately from our \Cref{thm:main}, also relaxing the sign assumption
on the entries of $H$.

\begin{theorem}[Lower bound for the number of strong nodal domains,
  {\cite[Thm~3.10(P2)]{DeiPutTud_acha23}},
  {\cite[Thm.~6.6]{GeLiu_cvpde23}}]
  \label{thm:strong_nodal_bound}
  Let $\psi$ be an eigenvector of $H$ corresponding to an eigenvalue
  $\lambda$ with upper label $k^\uparrow$.  Let $z(\psi)$ be the
  number of vertices where $\psi$ is zero. Let $G'$ be the subgraph
  induced by the non-zero vertices of $\psi$, and let $\beta'$ be its
  first Betti number. Let $G_+$ be the subgraph of $G'$ obtained by
  removing the edges $(u,v)$ such that $\psi_u H_{u,v} \psi_v >0$ and
  let $\beta_+$ be its first Betti number. Then the strong nodal
  domain count $\nu(\psi)$, defined as the number of connected
  components of the graph $G_+$, satisfies
  \begin{equation}
    \label{eq:strong_nodal_bound}
    \nu(\psi) \ge k^\uparrow - z(\psi) + \beta_+ - \beta'.
  \end{equation}
\end{theorem}

\begin{remark}
  The new definition of the strong nodal domain count as $c(G_+)$
  agrees with \Cref{def:nod_dom} when $H_{u,v} < 0$ for $u\sim v$.
\end{remark}

\begin{proof}
  Observe that the edges removed from $G'$ to obtain $G_+$ are
  precisely those counted by $\nodcnt(\psi,H)$,
  equation~\eqref{eq:nodal_count_def}.  Denote by $V'$ the vertices
  where $\psi$ is non-zero.  Recalling the formula for the first Betti
  number, equation~\eqref{eq:beta_def}, we get (cf.~\cite[Eq.~(9)]{BerRazSmi_jpa12})
  \begin{align}
    \nu(\psi)=c(G_+)
    &= |V'|-|E(G_+)|+\beta_+ \notag\\
    &= |V'|-\big(|E(G')|-\nodcnt(\psi,H)\big)+\beta_+
    \label{eq:BRS}
    = c(G')-\beta' + \nodcnt(\psi,H)+\beta_+.
  \end{align}
  \Cref{thm:main} cannot be applied to compute $\nodcnt(\psi,H)$
  directly, due to the zero entries of $\psi$.  However, it can be
  applied to $\psi$ as an eigenvector of $H$ restricted to the vertices
  $V'$.  Denoting this restriction by $H'$, we have
  \begin{equation}
    \label{eq:ndcnt_prime}
    \nodcnt(\psi,H) = \nodcnt(\psi,H')
    \geq k^\uparrow(H') - c(G'),
  \end{equation}
  where we used \Cref{thm:main} and the trivial bound
  \(n_-(\mathcal X_r)\geq 0\).  Here \(k^\uparrow(H')\) denotes the
  upper label of the same eigenvalue \(\lambda\) for the restricted
  matrix \(H'\).  By the Cauchy interlacing theorem
  \cite[Thm~4.3.28]{HornJohnson},
  \begin{equation}
    \label{eq:cauchyZ}
    k^\uparrow(H') \geq k^\uparrow(H) - z(\psi).
  \end{equation}
  Combining \eqref{eq:BRS}, \eqref{eq:ndcnt_prime}, and
  \eqref{eq:cauchyZ} gives the desired result.
\end{proof}

\begin{remark}
  \label{rem:comparison}
  Since $\beta_+ \geq 0$ and $\beta' \leq \beta(G)$, inequality
  \eqref{eq:strong_nodal_bound} implies the bound $\nu(\psi) \ge k^\uparrow -
  z(\psi) - \beta$ of H.~Xu and S.-T.~Yau \cite[Thm~1.3]{XuYau_jc12}.
  
  Deidda, Putti and Tudisco, in \cite[Eq.~(5)]{DeiPutTud_acha23},
  provide a second version of the bound \eqref{eq:strong_nodal_bound},
  replacing the multiplicity of $\lambda$ (hidden here inside the
  definition of $k^\uparrow$) with $c(G')$, the number of connected
  components of $G'$.  Under some circumstances, this bound is better
  than \eqref{eq:strong_nodal_bound}.

  C.~Ge and S.~Liu \cite{GeLiu_cvpde23} improve the bound in
  \eqref{eq:strong_nodal_bound} by further analysing the connectivity of the
  vertices where $\psi$ is zero and thereby replacing $z(\psi)$ with a
  smaller quantity.
\end{remark}

\bibliography{bk_bibl}

\def\cprime{$'$} \def\cprime{$'$} \def\cprime{$'$} \def\cprime{$'$}
  \def\cprime{$'$} \def\cprime{$'$} \def\cprime{$'$}
  \def\polhk#1{\setbox0=\hbox{#1}{\ooalign{\hidewidth
  \lower1.5ex\hbox{`}\hidewidth\crcr\unhbox0}}} \def\cprime{$'$}
  \def\cprime{$'$}
\begin{thebibliography}{10}

\bibitem{AloBanBer_cmp17}
{\sc L.~Alon, R.~Band, and G.~Berkolaiko}, {\em Nodal statistics on quantum
  graphs}, Comm. Math. Phys., 362 (2018), pp.~909--948.

\bibitem{AloBanBer_em22}
{\sc L.~Alon, R.~Band, and G.~Berkolaiko}, {\em Universality of nodal count
  distribution in large metric graphs}, Exper.\ Math., 33 (2024), pp.~301--335.

\bibitem{AloBerGor_prep25}
{\sc L.~Alon, G.~Berkolaiko, and M.~Goresky}, {\em Smooth critical points of
  eigenvalues on the torus of magnetic perturbations of graphs}.
\newblock preprint {\tt arXiv:2505.17215}, 2025.

\bibitem{AloGor_jst23}
{\sc L.~Alon and M.~Goresky}, {\em Morse theory for discrete magnetic operators
  and nodal count distribution for graphs}, J. Spectr. Theory, 13 (2023),
  pp.~1225--1260.

\bibitem{AloGor_prep24}
{\sc L.~Alon and M.~Goresky}, {\em Nodal count for a random signing of a graph
  with disjoint cycles}.
\newblock preprint {\tt arXiv:2403.01033}, 2024.

\bibitem{AloMikUrs_prep24}
{\sc L.~Alon, D.~Mikulincer, and J.~Urschel}, {\em Nodal count for orthogonal
  ensembles}.
\newblock Preprint, in preparation, 2024.

\bibitem{AnBakKupSho_fms14}
{\sc Y.~An, M.~Baker, G.~Kuperberg, and F.~Shokrieh}, {\em Canonical
  representatives for divisor classes on tropical curves and the matrix-tree
  theorem}, Forum Math. Sigma, 2 (2014), pp.~Paper No. e24, 25.

\bibitem{AshcroftMermin_solid}
{\sc N.~W. Ashcroft and N.~D. Mermin}, {\em Solid State Physics}, Holt,
  Rinehart and Winston, New York-London, 1976.

\bibitem{BacHarNag_bsmf97}
{\sc R.~Bacher, P.~de~la Harpe, and T.~Nagnibeda}, {\em The lattice of integral
  flows and the lattice of integral cuts on a finite graph}, Bull. Soc. Math.
  France, 125 (1997), pp.~167--198.

\bibitem{Ban_ptrsa14}
{\sc R.~Band}, {\em The nodal count {$\{0,1,2,3,\dots\}$} implies the graph is
  a tree}, Philos. Trans. R. Soc. Lond. A, 372 (2014), pp.~20120504, 24.
\newblock preprint {\tt arXiv:1212.6710}.

\bibitem{BanBerWey_jmp15}
{\sc R.~Band, G.~Berkolaiko, and T.~Weyand}, {\em Anomalous nodal count and
  singularities in the dispersion relation of honeycomb graphs}, J.\ Math.\
  Phys., 56 (2015), p.~122111.

\bibitem{BanOreSmi_incoll08}
{\sc R.~Band, I.~Oren, and U.~Smilansky}, {\em Nodal domains on graphs---how to
  count them and why?}, in Analysis on graphs and its applications, vol.~77 of
  Proc. Sympos. Pure Math., Amer. Math. Soc., Providence, RI, 2008, pp.~5--27.

\bibitem{Baumgartel_PertTheory}
{\sc H.~Baumg\"artel}, {\em Analytic perturbation theory for matrices and
  operators}, vol.~15 of Operator Theory: Advances and Applications,
  Birkh\"auser Verlag, Basel, 1985.

\bibitem{Ber_cmp08}
{\sc G.~Berkolaiko}, {\em A lower bound for nodal count on discrete and metric
  graphs}, Comm. Math. Phys., 278 (2008), pp.~803--819.

\bibitem{Ber_apde13}
\leavevmode\vrule height 2pt depth -1.6pt width 23pt, {\em Nodal count of graph
  eigenfunctions via magnetic perturbation}, Anal. PDE, 6 (2013),
  pp.~1213--1233.
\newblock preprint {\tt arXiv:1110.5373}.

\bibitem{BerCanCoxMar_paa22}
{\sc G.~Berkolaiko, Y.~Canzani, G.~Cox, and J.~L. Marzuola}, {\em A local test
  for global extrema in the dispersion relation of a periodic graph}, Pure
  Appl. Anal., 4 (2022), pp.~257--286.

\bibitem{BerKuc_graphs}
{\sc G.~Berkolaiko and P.~Kuchment}, {\em Introduction to Quantum Graphs},
  vol.~186 of Mathematical Surveys and Monographs, AMS, 2013.

\bibitem{BerRazSmi_jpa12}
{\sc G.~Berkolaiko, H.~Raz, and U.~Smilansky}, {\em Stability of nodal
  structures in graph eigenfunctions and its relation to the nodal domain
  count}, J. Phys. A, 45 (2012), p.~165203.

\bibitem{Big_blms97}
{\sc N.~Biggs}, {\em Algebraic potential theory on graphs}, Bull. London Math.
  Soc., 29 (1997), pp.~641--682.

\bibitem{Biy_laa03}
{\sc T.~B{\i}y{\i}ko{\u{g}}lu}, {\em A discrete nodal domain theorem for
  trees}, Linear Algebra Appl., 360 (2003), pp.~197--205.

\bibitem{Leydold_nodal}
{\sc T.~B{\i}y{\i}ko{\u{g}}lu, J.~Leydold, and P.~F. Stadler}, {\em Laplacian
  eigenvectors of graphs}, vol.~1915 of Lecture Notes in Mathematics, Springer,
  Berlin, 2007.
\newblock Perron-Frobenius and Faber-Krahn type theorems.

\bibitem{BogWei_ijmpa10}
{\sc C.~Bogner and S.~Weinzierl}, {\em Feynman graph polynomials}, Internat. J.
  Modern Phys. A, 25 (2010), pp.~2585--2618.

\bibitem{BorBra_dmj08}
{\sc J.~Borcea and P.~Br\"and\'en}, {\em Applications of stable polynomials to
  mixed determinants: {J}ohnson's conjectures, unimodality, and symmetrized
  {F}ischer products}, Duke Math. J., 143 (2008), pp.~205--223.

\bibitem{BroDev_siamjam14}
{\sc J.~C. Bronski and L.~DeVille}, {\em Spectral theory for dynamics on graphs
  containing attractive and repulsive interactions}, SIAM Journal on Applied
  Mathematics, 74 (2014), pp.~83--105.

\bibitem{BroDevFer_siamjam16}
{\sc J.~C. Bronski, L.~DeVille, and T.~Ferguson}, {\em Graph homology and
  stability of coupled oscillator networks}, SIAM J. Appl. Math., 76 (2016),
  pp.~1126--1151.

\bibitem{ChoOxlSokWag_aam04}
{\sc Y.-B. Choe, J.~G. Oxley, A.~D. Sokal, and D.~G. Wagner}, {\em Homogeneous
  multivariate polynomials with the half-plane property}, Adv. in Appl. Math.,
  32 (2004), pp.~88--187.

\bibitem{Chua_LinearNonlinearCircuits}
{\sc L.~O. Chua, C.~A. Desoer, and E.~S. Kuh}, {\em Linear and Nonlinear
  Circuits}, McGraw-Hill, New York, 1987.

\bibitem{Col_apde13}
{\sc Y.~Colin~de Verdi{\`e}re}, {\em Magnetic interpretation of the nodal
  defect on graphs}, Anal. PDE, 6 (2013), pp.~1235--1242.
\newblock preprint {\tt arXiv:1201.1110}.

\bibitem{DamanikFillman_1dSchroedI}
{\sc D.~Damanik and J.~Fillman}, {\em One-dimensional ergodic {S}chr\"odinger
  operators---{I}. {G}eneral theory}, vol.~221 of Graduate Studies in
  Mathematics, American Mathematical Society, Providence, RI, 2022.

\bibitem{DavGlaLeySta_laa01}
{\sc E.~B. Davies, G.~M.~L. Gladwell, J.~Leydold, and P.~F. Stadler}, {\em
  Discrete nodal domain theorems}, Linear Algebra Appl., 336 (2001),
  pp.~51--60.

\bibitem{DeiPutTud_acha23}
{\sc P.~Deidda, M.~Putti, and F.~Tudisco}, {\em Nodal domain count for the
  generalized graph {$p$}-{L}aplacian}, Appl. Comput. Harmon. Anal., 64 (2023),
  pp.~1--32.

\bibitem{Dor_Bul_IEEE}
{\sc F.~D\"orfler and F.~Bullo}, {\em Synchronization and transient stability
  in power networks and non-uniform kuramoto oscillators}, in Proceedings of
  the 2010 American Control Conference, 2010, pp.~930--937.

\bibitem{DorBul_siamjam12}
{\sc F.~Dorfler and F.~Bullo}, {\em Synchronization and transient stability in
  power networks and nonuniform {K}uramoto oscillators}, SIAM Journal on
  Control and Optimization, 50 (2012), pp.~1616--1642.

\bibitem{FauSot_prep25}
{\sc M.~Faust and F.~Sottile}, {\em The spectral edges conjecture via corners},
  2025.
\newblock preprint {\tt arXiv:2510.10143}.

\bibitem{Fie_cmj75}
{\sc M.~Fiedler}, {\em Eigenvectors of acyclic matrices}, Czechoslovak Math.
  J., 25(100) (1975), pp.~607--618.

\bibitem{Fie_cmj75b}
\leavevmode\vrule height 2pt depth -1.6pt width 23pt, {\em A property of
  eigenvectors of nonnegative symmetric matrices and its application to graph
  theory}, Czechoslovak Math. J., 25(100) (1975), pp.~619--633.

\bibitem{FriLinSah_prep26}
{\sc F.~Friedman, T.~Ling, and S.~Saha}, {\em Urschel nodal domains via
  perturbation theory}, 2026.
\newblock preprint {\tt arXiv:2605.11241}.

\bibitem{Fri_dmj93}
{\sc J.~Friedman}, {\em Some geometric aspects of graphs and their
  eigenfunctions}, Duke Math. J., 69 (1993), pp.~487--525.

\bibitem{GanMckMohSri_cmp23}
{\sc S.~Ganguly, T.~McKenzie, S.~Mohanty, and N.~Srivastava}, {\em Many nodal
  domains in random regular graphs}, Comm. Math. Phys., 401 (2023),
  pp.~1291--1309.

\bibitem{GanKre_oscillation}
{\sc F.~P. Gantmacher and M.~G. Krein}, {\em Oscillation matrices and kernels
  and small vibrations of mechanical systems}, AMS Chelsea Publishing,
  Providence, RI, 2002.
\newblock Translation based on the 1941 Russian original, Edited and with a
  preface by Alex Eremenko.

\bibitem{GeLiu_cvpde23}
{\sc C.~Ge and S.~Liu}, {\em Symmetric matrices, signed graphs, and nodal
  domain theorems}, Calc. Var. Partial Differential Equations, 62 (2023),
  pp.~Paper No. 137, 30.

\bibitem{GlaZhu_qjmam02}
{\sc G.~M.~L. Gladwell and H.~Zhu}, {\em Courant's nodal line theorem and its
  discrete counterparts}, Quart. J. Mech. Appl. Math., 55 (2002), pp.~1--15.

\bibitem{GriShaStr_jfa87}
{\sc M.~Grillakis, J.~Shatah, and W.~Strauss}, {\em Stability theory of
  solitary waves in the presence of symmetry. {I}}, J. Funct. Anal., 74 (1987),
  pp.~160--197.

\bibitem{Gro_ajm75}
{\sc L.~Gross}, {\em Logarithmic {S}obolev inequalities}, Amer. J. Math., 97
  (1975), pp.~1061--1083.

\bibitem{HanFuj_laa85}
{\sc S.-P. Han and O.~Fujiwara}, {\em An inertia theorem for symmetric matrices
  and its application to nonlinear programming}, Linear Algebra Appl., 72
  (1985), pp.~47--58.

\bibitem{HarKucSob_jpa07}
{\sc J.~M. Harrison, P.~Kuchment, A.~Sobolev, and B.~Winn}, {\em On occurrence
  of spectral edges for periodic operators inside the {B}rillouin zone}, J.
  Phys. A, 40 (2007), pp.~7597--7618.

\bibitem{Hay_laa68}
{\sc E.~V. Haynsworth}, {\em Determination of the inertia of a partitioned
  {H}ermitian matrix}, Linear Algebra and Appl., 1 (1968), pp.~73--81.

\bibitem{HigShi_ymj99}
{\sc Y.~Higuchi and T.~Shirai}, {\em A remark on the spectrum of magnetic
  {L}aplacian on a graph}, in Proceedings of the 10th {W}orkshop on
  {T}opological {G}raph {T}heory ({Y}okohama, 1998), vol.~47, 1999,
  pp.~129--141.

\bibitem{HornJohnson}
{\sc R.~A. Horn and C.~R. Johnson}, {\em Matrix analysis}, Cambridge University
  Press, Cambridge, second~ed., 2013.

\bibitem{Jon+_laa87}
{\sc H.~T. Jongen, T.~M\"{o}bert, J.~R\"{u}ckmann, and K.~Tammer}, {\em On
  inertia and {S}chur complement in optimization}, Linear Algebra Appl., 95
  (1987), pp.~97--109.

\bibitem{Kato_perturbation}
{\sc T.~Kato}, {\em Perturbation theory for linear operators}, Classics in
  Mathematics, Springer-Verlag, Berlin, 1995.
\newblock Reprint of the 1980 edition.

\bibitem{KorSab_jmaa14}
{\sc E.~Korotyaev and N.~Saburova}, {\em Schr\"odinger operators on periodic
  discrete graphs}, J. Math. Anal. Appl., 420 (2014), pp.~576--611.

\bibitem{KorSab_jmaa16}
\leavevmode\vrule height 2pt depth -1.6pt width 23pt, {\em Effective masses for
  {L}aplacians on periodic graphs}, J. Math. Anal. Appl., 436 (2016),
  pp.~104--130.

\bibitem{KotShiSun_jfa98}
{\sc M.~Kotani, T.~Shirai, and T.~Sunada}, {\em Asymptotic behavior of the
  transition probability of a random walk on an infinite graph}, J. Funct.
  Anal., 159 (1998), pp.~664--689.

\bibitem{KotSun_aam00}
{\sc M.~Kotani and T.~Sunada}, {\em Jacobian tori associated with a finite
  graph and its abelian covering graphs}, Adv. in Appl. Math., 24 (2000),
  pp.~89--110.

\bibitem{KotSun_incol03}
\leavevmode\vrule height 2pt depth -1.6pt width 23pt, {\em Spectral geometry of
  crystal lattices}, in Heat kernels and analysis on manifolds, graphs, and
  metric spaces ({P}aris, 2002), vol.~338 of Contemp. Math., Amer. Math. Soc.,
  Providence, RI, 2003, pp.~271--305.

\bibitem{Kuc_bams16}
{\sc P.~Kuchment}, {\em An overview of periodic elliptic operators}, Bull.
  Amer. Math. Soc. (N.S.), 53 (2016), pp.~343--414.

\bibitem{LyonsPeres_ProbTreesNetworks}
{\sc R.~Lyons and Y.~Peres}, {\em Probability on trees and networks}, vol.~42
  of Cambridge Series in Statistical and Probabilistic Mathematics, Cambridge
  University Press, New York, 2016.

\bibitem{Mad_laa88}
{\sc J.~H. Maddocks}, {\em Restricted quadratic forms, inertia theorems, and
  the {S}chur complement}, Linear Algebra Appl., 108 (1988), pp.~1--36.

\bibitem{MckUrs_imrn24}
{\sc T.~McKenzie and J.~Urschel}, {\em Nodal decompositions of a symmetric
  matrix}, Int. Math. Res. Not. IMRN,  (2024), pp.~6224--6258.

\bibitem{NaderiPankrashkin_SpectralGraphTheory}
{\sc K.~Naderi and K.~Pankrashkin}, {\em Introduction to Spectral Graph
  Theory}, Birkh\"auser, 2025.

\bibitem{Oxley_matroid}
{\sc J.~Oxley}, {\em Matroid theory}, in Handbook of the {T}utte polynomial and
  related topics, Chapman \& Hall/CRC, Boca Raton, FL, 2022, pp.~44--85.

\bibitem{PerCasPer_prb09}
{\sc V.~M. Pereira, A.~H. Castro~Neto, and N.~M.~R. Peres}, {\em Tight-binding
  approach to uniaxial strain in graphene}, Phys. Rev. B, 80 (2009), p.~045401.

\bibitem{PhiSar_dmj87}
{\sc R.~Phillips and P.~Sarnak}, {\em Geodesics in homology classes}, Duke
  Math. J., 55 (1987), pp.~287--297.

\bibitem{ReedSimon_v14}
{\sc M.~Reed and B.~Simon}, {\em Methods of modern mathematical physics.
  {I--4}. {F}unctional analysis}, Academic Press, New York, 1972.

\bibitem{Ria_dam12}
{\sc R.~Riaza}, {\em Cyclic matrices of weighted digraphs}, Discrete Appl.
  Math., 160 (2012), pp.~280--290.

\bibitem{Schrijver_LinearAndIntegerProg}
{\sc A.~Schrijver}, {\em Theory of linear and integer programming}, John Wiley
  \& Sons, Ltd., Chichester, 1986.

\bibitem{ShiSot_ot26}
{\sc S.~P. Shipman and F.~Sottile}, {\em Algebraic aspects of periodic graph
  operators}, in Operator Theory, D.~Alpay, F.~Colombo, and I.~Sabadini, eds.,
  Springer, 2026, pp.~1--41.
\newblock preprint {\tt arXiv:2502.03659}.

\bibitem{Sjo_mcm91}
{\sc J.~A. Sjogren}, {\em Cycles and spanning trees}, Math. Comput. Modelling,
  15 (1991), pp.~87--102.

\bibitem{Spi_ICM10}
{\sc D.~A. Spielman}, {\em Algorithms, graph theory, and linear equations in
  {L}aplacian matrices}, in Proceedings of the {I}nternational {C}ongress of
  {M}athematicians. {V}olume {IV}, Hindustan Book Agency, New Delhi, 2010,
  pp.~2698--2722.

\bibitem{Stu_jmpa36}
{\sc C.~Sturm}, {\em M\'emoire sur les \'equations diff\'erentielles
  lin\'eaires du second ordre}, J. Math. Pures Appl., 1 (1836), pp.~106--186.

\bibitem{Sunada_TopologicalCrystallography}
{\sc T.~Sunada}, {\em Topological crystallography}, vol.~6 of Surveys and
  Tutorials in the Applied Mathematical Sciences, Springer, Tokyo, 2013.
\newblock With a view towards discrete geometric analysis.

\bibitem{Tutte_matroids}
{\sc W.~T. Tutte}, {\em Lectures on matroids}, J. Res. Nat. Bur. Standards
  Sect. B, 69B (1965), pp.~1--47.

\bibitem{Tutte_graph_theory}
\leavevmode\vrule height 2pt depth -1.6pt width 23pt, {\em Graph theory},
  vol.~21 of Encyclopedia of Mathematics and its Applications, Addison-Wesley
  Publishing Company, Advanced Book Program, Reading, MA, 1984.

\bibitem{Urs_laa18}
{\sc J.~C. Urschel}, {\em Nodal decompositions of graphs}, Linear Algebra
  Appl., 539 (2018), pp.~60--71.

\bibitem{XuYau_jc12}
{\sc H.~Xu and S.-T. Yau}, {\em Nodal domain and eigenvalue multiplicity of
  graphs}, J. Comb., 3 (2012), pp.~609--622.

\end{thebibliography}
\bibliographystyle{siam}

\end{document}